\renewcommand*{\backref}[1]{}
\renewcommand*{\backrefalt}[4]{%
    \ifcase #1 (Not cited.)%
    \or        (Cited on page~#2.)%
    \else      (Cited on pages~#2.)%
    \fi}
\definecolor{dkgreen}{rgb}{0,0.6,0}
\definecolor{gray}{rgb}{0.5,0.5,0.5}
\definecolor{mauve}{rgb}{0.58,0,0.82}
\tiny\color{gray},
\def\@tocline#1#2#3#4#5#6#7{\relax
  \ifnum #1>\c@tocdepth 
  \else
    \par \addpenalty\@secpenalty\addvspace{#2}%
    \begingroup \hyphenpenalty\@M
    \@ifempty{#4}{%
      \@tempdima\csname r@tocindent\number#1\endcsname\relax
    }{%
      \@tempdima#4\relax
    }%
    \parindent\z@ \leftskip#3\relax \advance\leftskip\@tempdima\relax
    \rightskip\@pnumwidth plus4em \parfillskip-\@pnumwidth
    #5\leavevmode\hskip-\@tempdima
      \ifcase #1
       \or\or \hskip 1em \or \hskip 2em \else \hskip 3em \fi%
      #6\nobreak\relax
    \hfill\hbox to\@pnumwidth{\@tocpagenum{#7}}\par
    \nobreak
    \endgroup
  \fi}
\DeclareMathOperator{\Ext}{Ext}
\DeclareMathOperator{\spec}{Spec}
\DeclareMathOperator{\QCoh}{\mathrm{QCoh}}
\DeclareMathOperator{\Sym}{Sym}
\newtheorem{lemma}{Lemma}[section]
\newtheorem{corollary}[lemma]{Corollary}
\newtheorem{theorem}[lemma]{Theorem}
\newtheorem{prop}[lemma]{Proposition}
\newtheorem{conjecture}[lemma]{Conjecture}
\theoremstyle{definition}
\newtheorem{definition}[lemma]{Definition}
\newtheorem{remark}[lemma]{Remark}
\newcommand{\FF}{\mathbf{F}}
\newcommand{\Z}{\mathbf{Z}}
\newcommand{\cC}{\mathcal{C}}
\newcommand{\RR}{\mathbf{R}}
\newcommand{\M}{{M}}
\newcommand{\Mfg}{\M_{FG}}
\newcommand{\Mcub}{\M_\mathrm{cub}}
\newcommand{\Mell}{\M_\mathrm{ell}}
\newcommand{\sm}{\mathrm{sm}}
\newcommand{\co}{\mathcal{O}}
\newcommand{\ce}{\mathcal{E}}
\newcommand{\PP}{\mathbf{P}}
\newcommand{\GG}{\mathbb{G}}
\newcommand{\Mod}{\mathrm{Mod}}
\newcommand{\Map}{\mathrm{Map}}
\newcommand{\Lone}{L_{K(1)}}
\newcommand{\der}{\mathrm{der}}
\newcommand{\Alg}{\mathrm{Alg}}
\newcommand{\Eoo}{{\mathbf{E}_\infty}}
\newcommand{\cf}{\mathcal{F}}
\newcommand{\DD}{\mathbf{D}}
\newcommand{\dR}{\mathrm{dR}}
\newcommand{\ol}[1]{\overline{#1}}
\newcommand{\E}[1]{\mathbf{E}_{{#1}}}
\newcommand{\mmod}{/\!\!/}
\renewcommand{\S}{\mathbb{S}}
\newcommand{\GL}{\mathrm{GL}}
\newcommand{\MU}{\mathrm{MU}}
\newcommand{\tmf}{\mathrm{tmf}}
\newcommand{\TMF}{\mathrm{TMF}}
\newcommand{\BPP}{\mathrm{BP}}
\renewcommand{\H}{\mathrm{H}}
\newcommand{\BSpin}{\mathrm{BSpin}}
\newcommand{\MString}{\mathrm{MString}}
\title{Hodge theory for elliptic curves and the Hopf element $\nu$}
\author{Sanath Devalapurkar}
\email{sanathd@mit.edu}
\begin{document}

\maketitle

\begin{abstract}
    We show that the vector bundle on the moduli stack $M_\mathrm{ell}$ of
    elliptic curves associated to the $2$-cell complex $C\nu$ is isomorphic to
    the de Rham cohomology sheaf
    $\mathrm{H}^1_\mathrm{dR}(\mathcal{E}/M_\mathrm{ell})$ of the universal
    elliptic curve $\mathcal{E}\to M_\mathrm{ell}$.  We use this to calculate
    the homotopy groups of the $\mathbf{E}_{1}$-quotient $\mathrm{tmf} /\!\!/
    \nu$ of $\mathrm{tmf}$ by $\nu$, called the spectrum of ``topological
    quasimodular forms'', by relating its Adams--Novikov spectral sequence to
    the cohomology of the moduli stack of cubic curves with a chosen splitting
    of the Hodge--de Rham filtration.
\end{abstract}

\section{Introduction}

In this paper, we study the relationship between the Hopf invariant one element
$\nu\in \pi_3 \tmf$ and the Hodge filtration for elliptic curves. Namely, we
show that the vector bundle on the moduli stack $\Mell$ of elliptic curves
associated to $C\nu$ is isomorphic to the (middle) de Rham cohomology
$\H^1_\dR(\ce/\Mell)$ of the universal elliptic curve $\ce\to \Mell$. A version
of this relationship had been stated by Hopkins in \cite[Section
5]{hopkins-icm}. Using this, we calculate the homotopy groups of the
$\E{1}$-quotient $\tmf\mmod\nu$ of $\tmf$ by $\nu\in \pi_3(\S)$ by showing that
the $E_2$-page of its Adams--Novikov spectral sequence is isomorphic to the
cohomology of the moduli stack of cubic curves with a chosen splitting of the
Hodge--de Rham filtration. The $\E{1}$-ring $\tmf\mmod\nu$ is called the
spectrum of \emph{topological quasimodular forms} (see Remark
\ref{name-quasimodular}).

The ring spectrum $\tmf\mmod\nu$ is interesting for several reasons. One
motivation for studying it comes from the Ando-Hopkins-Rezk orientation
$\MU\langle 6\rangle \to \tmf$ (see \cite{koandtmf}). As is made clear during
the course of the proof, a key reason for why this orientation does not factor
through the map $\MU\langle 6\rangle\to \mathrm{MSU}$ is because $\tmf$ detects
the element $\nu \in \pi_3(\S)$; this in turn is related to the fact that the
weight $2$ Eisenstein series is not a modular form. Since $\tmf\mmod\nu$ is the
``smallest'' coherently structured (i.e., $\E{1}$-) $\tmf$-algebra with a
nullhomotopy of $\nu$, one might expect the composite
\begin{equation}\label{comp}
    \MU\langle 6\rangle \to \tmf\to \tmf\mmod\nu
\end{equation}
to factor through $\mathrm{MSU}$ via an $\E{1}$-map; see also Remark
\ref{orientations}. Although we do not prove in this paper that the composite
\eqref{comp} factors through $\mathrm{MSU}$, we will use the results of this
paper to address this question in future work. The connection between $\nu$ and
the weight $2$ Eisenstein series is also discussed in Section
\ref{descent-sseq}.  The relationship between $\nu$ and de Rham cohomology is
also intrinsically intriguing, because the Hodge--de Rham filtration on the de
Rham cohomology of an elliptic curve is related to many deep topics in
artihmetic geometry (such as Grothendieck--Messing theory).

We begin in Section \ref{hodge-background} by recalling some background on Hodge
theory for cubic curves from algebraic geometry. In particular, we give a Hopf
algebroid presentation for the moduli stack $\Mcub^\dR$ of cubic curves with a
chosen splitting of the Hodge--de Rham exact sequence. In Section
\ref{tmf-A-hodge}, we prove our main technical result relating the
Adams--Novikov spectral sequence of $\tmf\mmod\nu$ to the cohomology of the
moduli stack $\Mcub^\dR$.  Finally, in Section \ref{descent-sseq} we prove
Theorem \ref{hodge-theorem}, which calculates this Adams--Novikov spectral
sequence. It degenerates at the $E_4$-page, and $\TMF\mmod\nu$ is found to be
$24$-periodic. Moreover, $\tmf\mmod\nu$ is homotopy commutative. These results
were discovered independently by Rezk in unpublished work, and we give our own
proof of his calculation of $\pi_\ast(\tmf\mmod\nu)$.

\subsection{Acknowledgements}

I would like to thank Charles Rezk: after we proved part of Theorem
\ref{hodge-theorem}, we discovered that he had proved the result independently;
I would like to thank him for discussions about the results in this paper, and
for letting us write up this result. I am also grateful to Mark Behrens, Robert
Burklund, Tyler Lawson, Lennart Meier, and Andrew Senger for helpful
discussions, and in particular to Robert Burklund for providing helpful comments
on a draft.

\section{Background on Hodge theory}\label{hodge-background}

In this section, we recall some background on Hodge theory for cubic curves
over a general base scheme. Multiple sources (such as \cite[Appendix
A1.2]{katz-p-adic}) discuss Hodge theory for (smooth) elliptic curves.

Let $f:X\to Y$ be a morphism of schemes. One then has the $f^{-1} \co_Y$-linear
relative de Rham complex $\Omega^\bullet_{X/Y}$.
\begin{definition}
    The \emph{$i$th relative de Rham cohomology} $\H^i_\dR(X/Y)$ of $f:X\to Y$
    is defined to be the hypercohomology sheaf $\RR^i
    f_\ast(\Omega^\bullet_{X/Y})$ on $Y$.
\end{definition}
The Tot spectral sequence defines the Hodge--de Rham spectral sequence of
sheaves on $Y$:
$$E_1^{s,t} = R^t f_\ast \Omega^s_{X/Y} \Rightarrow \H^{s+t}_\dR(X/Y).$$
The following (easy) result is well-known.
\begin{theorem}
    If $f:X\to Y$ is a smooth, proper, and surjective morphism of relative
    dimension $1$ with geometrically connected fibers, then the Hodge--de Rham
    spectral sequence degenerates at the $E_1$-page.
\end{theorem}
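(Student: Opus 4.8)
The plan is to exploit that for relative dimension one the $E_1$-page is supported in a $2\times 2$ square, so that degeneration reduces to the vanishing of two $d_1$ differentials, one of which is elementary and the other of which is its Serre-duality transpose. First I would note that $\Omega^s_{X/Y}=0$ for $s\geq 2$ (relative dimension one), and that since $f$ is proper of relative dimension one, $R^t f_\ast(-)=0$ for $t\geq 2$ on the quasi-coherent sheaves $\Omega^s_{X/Y}$ by Grothendieck's cohomological-dimension bound on the one-dimensional fibers. Hence the only possibly nonzero terms are the $E_1^{s,t}$ with $s,t\in\{0,1\}$, namely $f_\ast\co_X$, $f_\ast\Omega^1_{X/Y}$, $R^1 f_\ast\co_X$, and $R^1 f_\ast\Omega^1_{X/Y}$. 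Because $s$ takes only the values $0$ and $1$, every differential $d_r$ with $r\geq 2$ vanishes automatically for bidegree reasons, and degeneration at $E_1$ is equivalent to the vanishing of the two differentials
$$d_1^{0,0}\colon f_\ast\co_X\to f_\ast\Omega^1_{X/Y},\qquad d_1^{0,1}\colon R^1 f_\ast\co_X\to R^1 f_\ast\Omega^1_{X/Y},$$
both induced by the universal relative derivation $d\colon\co_X\to\Omega^1_{X/Y}$.

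The differential $d_1^{0,0}$ I would dispatch directly. Since $f$ is smooth (hence flat with geometrically reduced fibers), proper, and has geometrically connected fibers, one has $f_\ast\co_X=\co_Y$ universally. The relative de Rham differential is $f^{-1}\co_Y$-linear and annihilates $f^{-1}\co_Y$, so $f_\ast(d)$ kills the image of $\co_Y$ and therefore vanishes.

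The crux is the vanishing of $d_1^{0,1}$, and this is where I expect the real work to lie. My main line of attack is relative Serre--Grothendieck duality for the smooth proper curve $f$, whose relative dualizing sheaf is $\omega_{X/Y}=\Omega^1_{X/Y}$. Using $(\Omega^s_{X/Y})^\vee\otimes\Omega^1_{X/Y}\cong\Omega^{1-s}_{X/Y}$ for $s\in\{0,1\}$, this duality identifies $E_1^{s,t}=R^t f_\ast\Omega^s_{X/Y}$ canonically with the dual of $E_1^{1-s,1-t}$; in particular $R^1 f_\ast\co_X$ is dual to $f_\ast\Omega^1_{X/Y}$, and $R^1 f_\ast\Omega^1_{X/Y}\cong\co_Y$ is dual to $f_\ast\co_X$. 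The key input is that the two-term de Rham complex $\Omega^\bullet_{X/Y}=[\co_X\to\Omega^1_{X/Y}]$ is Poincaré self-dual, i.e. $R\mathcal{H}om_{\co_X}(\Omega^\bullet_{X/Y},\Omega^1_{X/Y}[1])\simeq\Omega^\bullet_{X/Y}[1]$; applying $\RR f_\ast$ and Grothendieck duality makes $\RR f_\ast\Omega^\bullet_{X/Y}$ a self-dual object and thereby endows the entire Hodge--de Rham spectral sequence with a perfect self-pairing. Under this pairing $d_1^{0,1}$ is identified with the transpose $\pm(d_1^{0,0})^\vee$, so the vanishing of $d_1^{0,0}$ forces $d_1^{0,1}=0$.

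I expect the self-duality of $\Omega^\bullet_{X/Y}$, together with its compatibility with the spectral-sequence differentials, to be the one genuinely nontrivial point: the naive cup product on the $E_1$-page does not detect it, since $E_1^{0,0}=\co_Y$ contributes only scalars, so one must genuinely invoke duality at the level of the complex. If a more hands-on argument is preferred, I would instead verify $d_1^{0,1}=0$ on geometric fibers: for a smooth proper geometrically connected curve $C$ over a field, the composite $\H^1(C,\co_C)\to \H^1(C,\Omega^1_C)\xrightarrow{\Tr}k$ vanishes because, in a \v{C}ech model, $\H^1(d)$ sends a cocycle $(f_{ij})$ to $(df_{ij})$, whose image under the residue description of $\Tr$ is a sum of residues of exact differentials and hence zero. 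To pass back to the sheaf map one needs the formation of $R^1 f_\ast\co_X$ and $R^1 f_\ast\Omega^1_{X/Y}$ to commute with base change, which holds by cohomology-and-base-change and local freeness of these sheaves; but to sidestep any circularity I would ultimately run the duality argument directly at the level of sheaves on $Y$ rather than fiber by fiber.
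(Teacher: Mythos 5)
The paper offers no proof of this statement at all --- it is labelled ``easy'' and ``well-known'' and immediately used --- so there is nothing internal to compare your argument against; I will assess it on its own terms. Your skeleton is the standard one and is sound: the $E_1$-page lives in the square $s,t\in\{0,1\}$, all $d_r$ with $r\geq 2$ vanish for bidegree reasons, $d_1^{0,0}$ dies because $f_\ast\co_X=\co_Y$ and the relative differential is $f^{-1}\co_Y$-linear, and the entire content is $d_1^{0,1}=0$. You are also right, and it is to your credit that you notice it, that the naive multiplicativity of the spectral sequence does not see $d_1^{0,1}$, and that some genuine input from duality (or residues) is required.

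Both of your routes to $d_1^{0,1}=0$ have an identifiable soft spot, though. In the duality route, the displayed self-duality $R\mathcal{H}om_{\co_X}(\Omega^\bullet_{X/Y},\Omega^1_{X/Y}[1])\simeq\Omega^\bullet_{X/Y}[1]$ does not parse as written: $\Omega^\bullet_{X/Y}$ is not a complex of $\co_X$-modules (the differential is only $f^{-1}\co_Y$-linear), so the internal Hom must be taken in a larger category, and the statement you actually need --- that the Serre-duality pairing $R^1f_\ast\co_X\otimes f_\ast\Omega^1_{X/Y}\to\co_Y$ and $R^1f_\ast\Omega^1_{X/Y}\otimes f_\ast\co_X\to\co_Y$ intertwines $d_1^{0,1}$ with $\pm(d_1^{0,0})^\vee$ --- is precisely the nontrivial compatibility of Grothendieck duality with the de Rham differential. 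It is true, but you have reduced the theorem to it rather than proved it; as it stands this is a citation to Poincar\'e duality for relative de Rham cohomology, not an argument. In the fiberwise route, the residue computation over a field is correct ($\mathrm{res}_p(df)=0$ since $df$ has no $dt/t$ term in any characteristic), but the globalization fails as stated: a morphism of locally free sheaves on $Y$ whose restriction to every fiber vanishes need not vanish when $Y$ is non-reduced (e.g.\ multiplication by $\epsilon$ on $\co_{\spec k[\epsilon]/\epsilon^2}$), and cohomology-and-base-change only tells you that the map commutes with base change, not that fiberwise vanishing suffices. The standard repair is to reduce to a universal family over a base that is reduced --- indeed smooth over $\Z$ --- such as the Hilbert scheme of tricanonically embedded genus-$g$ curves, or, for the application in this paper, to note that $\Mell$ and $\Mcub$ are themselves smooth over $\Z$, so the fiberwise argument legitimately applies there. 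With either patch made explicit your proof is complete.
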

Since $f$ is of relative dimension $1$, the only interesting de Rham cohomology
is in the middle dimension, i.e., $\H^1_\dR(X/Y)$. In particular, for such $f$,
there is an exact sequence
$$0\to f_\ast \Omega^1_{X/Y} \to \H^1_\dR(X/Y)\to R^1 f_\ast \co_X\to 0$$
of quasicoherent sheaves on $Y$; this is called the \emph{Hodge--de Rham exact
sequence}. Moreover, the pairing $\H^1_\dR(X/Y)\otimes_{\co_Y} \H^1_\dR(X/Y)\to
\co_Y$ is determined by the canonical perfect pairing
$$R^1 f_\ast \co_X\otimes_{\co_Y} f_\ast \Omega^1_{X/Y} \to R^1 f_\ast
\Omega^1_{X/Y} \xrightarrow{\mathrm{trace}} \co_Y.$$
We now specialize to the case when $f:X\to Y$ is an elliptic curve $f:E\to
S$. Then $f_\ast \Omega^1_{E/S}$ is the line bundle $\omega_{E/S}$ of invariant
differentials. The pairing $\omega_{E/S}\otimes_{\co_S} R^1 f_\ast \co_E\to
\co_S$ is perfect, and so $R^1 f_\ast \co_E \cong \omega_{E/S}^{-1}$. In
particular, the Hodge--de Rham exact sequence for $E\to S$ becomes
\begin{equation}\label{hdr}
    0\to \omega_{E/S} \to \H^1_\dR(E/S)\to \omega_{E/S}^{-1}\to 0.
\end{equation}
If $\Mell$ denotes the moduli stack of elliptic curves, and $\ce\to \Mell$ is
the universal elliptic curve, then \eqref{hdr} exhibits $\H^1_\dR(\ce/\Mell)$ as
an element of $\Ext^1_{\Mell}(\omega^{-1}, \omega)$.
\begin{remark}
    If $S$ is a $p$-adic scheme, then \eqref{hdr} corresponds to the Hodge
    filtration of the Dieudonn\'e module $\DD(E[p^\infty]/S)$ of $E$ under the
    isomorphism $\H^1_\dR(E/S) \cong \DD(E[p^\infty]/S)$; see, for instance,
    \cite[Section V]{katz-crystalline}.
\end{remark}

The following is an immediate consequence of \cite[Equation
A1.2.3]{katz-p-adic}:
\begin{prop}\label{katz-prop}
    If $f:E\to S$ is an elliptic curve, then there is an isomorphism
    $\H^1_\dR(E/S) \otimes \omega_{E/S} \cong f_\ast
    \Omega^1_{E/S}(\infty)^{\otimes 2}$.
\end{prop}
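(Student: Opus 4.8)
The plan is to read the twist $(\infty)$ as the first-order twist along the origin section $O\colon S\to E$ (the point at infinity in Weierstrass coordinates), so that $\Omega^1_{E/S}(\infty)=\Omega^1_{E/S}\otimes\co_E(O)$, and to reduce everything to the single identification $\H^1_\dR(E/S)\cong f_\ast\Omega^1_{E/S}(2O)$. First I would use that the relative cotangent sheaf of an elliptic curve is trivialized by the invariant differential: the evaluation map $f^\ast\omega_{E/S}=f^\ast f_\ast\Omega^1_{E/S}\to\Omega^1_{E/S}$ is an isomorphism, since on each fibre $\Omega^1$ is the trivial line bundle with one-dimensional space of global sections. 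Consequently $\Omega^1_{E/S}(\infty)^{\otimes 2}=f^\ast\omega_{E/S}^{\otimes 2}\otimes\co_E(2O)=f^\ast\omega_{E/S}\otimes\Omega^1_{E/S}(2O)$, and the projection formula gives $f_\ast\big(\Omega^1_{E/S}(\infty)^{\otimes 2}\big)\cong\omega_{E/S}\otimes f_\ast\Omega^1_{E/S}(2O)$. Tensoring the desired identification with $\omega_{E/S}$ then produces exactly the left-hand side, so it suffices to construct a natural isomorphism $\H^1_\dR(E/S)\cong f_\ast\Omega^1_{E/S}(2O)$.

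Next I would build the comparison map $\phi\colon f_\ast\Omega^1_{E/S}(2O)\to\H^1_\dR(E/S)$ carrying a relative meromorphic differential with at worst a double pole along $O$ to its de Rham class. The key point is that a section $\eta$ of $\Omega^1_{E/S}(2O)$ has $O$ as its only polar locus, so since the residues of a meromorphic differential on a proper curve sum to zero, the residue of $\eta$ along $O$ vanishes; thus near $O$ its polar part is $a_{-2}t^{-2}\,dt=d(-a_{-2}t^{-1})$ for a coordinate $t$ along the section, with no logarithmic term. Using the cover of $E$ by $E\smallsetminus O$ and the formal completion along $O$, the datum of $\eta$ on $E\smallsetminus O$, the holomorphic part of $\eta$ near $O$, and the primitive $a_{-2}t^{-1}$ of the polar part on the punctured neighborhood is a hypercocycle for the de Rham complex $[\co_E\xrightarrow{d}\Omega^1_{E/S}]$, and I set $\phi(\eta)$ to be its class. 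Because one only ever integrates the polar term $t^{-2}\,dt$, which is exact on the punctured formal disc in every characteristic, $\phi$ is a well-defined $\co_S$-linear map over an arbitrary base $S$; by construction it sends the subsheaf $f_\ast\Omega^1_{E/S}=\omega_{E/S}$ of genuine differentials onto the Hodge step $\omega_{E/S}$ of \eqref{hdr}.

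It then remains to check that $\phi$ is an isomorphism, which I would do on geometric fibres. Both sheaves are locally free of rank $2$: for $\H^1_\dR$ this is \eqref{hdr}, while for $f_\ast\Omega^1_{E/S}(2O)$ it follows from Riemann--Roch on the genus-one fibres ($\deg\Omega^1(2O)=2$, so $h^0=2$ and $h^1=0$) together with cohomology and base change. Over a geometric point, the kernel of $\phi$ consists of exact differentials lying in $H^0(\Omega^1(2O))$; but $df\in\Omega^1(2O)$ forces $f$ to have at most a simple pole at $O$ and to be regular elsewhere, hence $f\in H^0(\co_E(O))$, which on a genus-one curve is just the constants, so $df=0$. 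Thus $\phi$ is injective on fibres, hence an isomorphism between fibres of equal dimension, hence an isomorphism of sheaves; tensoring with $\omega_{E/S}$ and invoking the first paragraph finishes the proof.

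The main obstacle is making $\phi$ rigorous uniformly in families over an arbitrary (possibly non-reduced, mixed-characteristic) base: one must know that the residue of $\eta$ along $O$ vanishes identically as a section of $\co_S$ and that the resulting class is independent of the coordinate $t$ and of the polar/holomorphic splitting. This is precisely the content encoded in \cite[Equation A1.2.3]{katz-p-adic}; granting it, the fibrewise injectivity and the rank count are routine. A variant that avoids the explicit cocycle is to note that both sides are extensions of $\co_S$ by $\omega_{E/S}^{\otimes 2}$ --- the left from tensoring \eqref{hdr} with $\omega_{E/S}$, the right from $0\to\co_S\to f_\ast\co_E(2O)\to\omega_{E/S}^{-2}\to0$ tensored with $\omega_{E/S}^{\otimes 2}$ --- and to compare their classes in $\Ext^1(\co_S,\omega_{E/S}^{\otimes 2})$; but identifying these classes comes down to the same residue computation.
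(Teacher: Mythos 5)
The paper offers no proof of this proposition, deferring entirely to \cite[Equation A1.2.3]{katz-p-adic}; your argument is a correct reconstruction of exactly what that reference establishes, namely the identification $\H^1_\dR(E/S)\cong f_\ast\Omega^1_{E/S}(2O)$ via differentials of the second kind, followed by the routine projection-formula reduction using $\Omega^1_{E/S}\cong f^\ast\omega_{E/S}$. The points you flag as needing care over a general base (the relative residue theorem and the coordinate-independence of the hypercocycle) are precisely what Katz supplies, so nothing essential is missing.
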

The map 
$$f_\ast \Omega^1_{E/S}(\infty)^{\otimes 2} \cong \H^1_\dR(E/S) \otimes
\omega_{E/S} \to \omega_{E/S}^{-1}\otimes \omega_{E/S} = \co_S$$
induced by the Hodge--de Rham exact sequence sends a section of $f_\ast
\Omega^1_{E/S}(\infty)^{\otimes 2}$ to its residue at $\infty$.

We can now generalize the above story to the non-smooth setting. First, we
recall the definition of a cubic curve.
\begin{definition}
    A \emph{cubic curve} $f:E\to S$ over a scheme $S$ is a flat and proper
    morphism of finite presentation whose fibers are reduced, irreducible curves
    of arithmetic genus $1$, along with a section $\infty:S\to E$ whose image is
    contained in the smooth locus $E^\sm$ of $f$. Let $\Mcub$ denote the stack
    of cubic curves, and let $f:\ce\to \Mcub$ denote the universal cubic curve.
\end{definition}
Let $\omega$ denote the line bundle on $\Mcub$ assigning to a cubic curve
$f:E\to S$ the cotangent bundle $\omega_{E/S}$ along the section $\infty$. The
vector bundle $\H^1_\dR(\ce/\Mell)$ over $\Mell$ extends to a vector bundle,
denoted $\H^1_\dR(\ce/\Mcub)$, over $\Mcub$; this vector bundle is represented
by an element in $\Ext^1_{\Mcub}(\omega^{-1}, \omega) \cong \H^1(\Mcub;
\omega^{\otimes 2})$. Upon tensoring $\H^1_\dR(\ce/\Mcub)$ with $\omega$, we
obtain for every cubic curve $f:E\to S$ an exact sequence
\begin{equation}\label{hdr-redux}
    0\to \omega_{E/S}^{\otimes 2} \to f_\ast \Omega^1_{E/S}(\infty)^{\otimes 2}
    \to f_\ast \co_{E/S} \cong \co_S\to 0,
\end{equation}
which we shall refer to as the Hodge--de Rham exact sequence.
\begin{definition}
    Let $\Mcub^\dR$ denote the moduli stack of cubic curves with a chosen
    splitting of the Hodge--de Rham exact sequence, and let $\Mell^\dR =
    \Mcub^\dR\times_{\Mcub} \Mell$ denote the moduli stack of elliptic curves
    with a chosen splitting of the Hodge--de Rham exact sequence.
\end{definition}
In order to do calculations with $\Mcub^\dR$, we would like to obtain a Hopf
algebroid presentation of this stack. To do so, we recall a Hopf algebroid
presentation of $\Mcub$. Zariski-locally on any base scheme $S$, a cubic curve
is described by a Weierstrass equation
\begin{equation}\label{weierstrass}
    y^2 + a_1 xy + a_3 y = x^3 + a_2 x^2 + a_4 x + a_6,
\end{equation}
with other choices of coordinates $(x,y)$ given by the transformations
$$x\mapsto x+r, \ y\mapsto y + sx + t.$$
The moduli stack $\Mcub$ of cubic curves is presented by the Hopf algebroid
$$(D,\Gamma) = (\Z[a_1, a_2, a_3, a_4, a_6], D[r,s,t]),$$
with gradings $|a_i| = i$ and $|r| = 2$, $|s| = 1$, and $|t| = 3$. Studying how
the coefficients $a_i$ transform gives the right unit $\eta_R:D\to \Gamma$ of
this Hopf algebroid:
\begin{align*}
    a_1 & \mapsto a_1 + 2s,\\
    a_2 & \mapsto a_2 - a_1 s + 3r - s^2,\\
    a_3 & \mapsto a_3 + a_1 r + 2t,\\
    a_4 & \mapsto a_4 + a_3 s + 2a_2 r - a_1 t - a_1 rs - 2st + 3r^2,\\
    a_6 & \mapsto a_6 + a_4r - a_3 t + a_2 r^2 - a_1 rt - t^2 + r^3.
\end{align*}

To determine a Hopf algebroid presentation of $\Mcub^\dR$, note that the
coordinate $x$ defines a function on $\ce$ with double pole at $\infty$, and
that it is in fact the only such non-constant function on the smooth locus of
$\ce$ (this follows from the usual calculation \cite[Section 2.2.5]{katz-mazur}
with the Riemann-Roch formula). In particular, Proposition \ref{katz-prop}
implies that the element $[r]$ in the cobar complex for the Hopf algebroid
$(D,\Gamma)$ detects the extension in $\Ext^1_{\Mcub}(\co, \omega^{\otimes 2}) =
\Ext^{1,2}_\Gamma(D, D)$ determined by the de Rham cohomology
$\H^1_\dR(\ce/\Mell)$. By the exact sequence \eqref{hdr-redux}, a choice of
Hodge--de Rham splitting on the universal cubic curve amounts to fixing a choice
of coordinate $x$ (although $y$ is allowed to vary).  Consequently:
\begin{prop}\label{de-rham-presentation}
    The moduli stack $\Mcub^\dR$ of cubic curves with a chosen splitting of the
    Hodge--de Rham exact sequence is presented by the Hopf algebroid $(D,\Sigma)
    = (\Z[a_1, a_2, a_3, a_4, a_6], D[s,t])$, with gradings\footnote{Recall that
    the topological grading is \emph{double} the algebraic grading.} $|a_i| =
    i$, $|s| = 1$, and $|t| = 3$. The right unit is the same as in that of the
    elliptic curve Hopf algebroid, except with $r=0$:
    \begin{align}
        a_1 & \mapsto a_1 + 2s, \nonumber\\
        a_2 & \mapsto a_2 - a_1 s - s^2, \nonumber\\
        a_3 & \mapsto a_3 + 2t, \nonumber\\
        a_4 & \mapsto a_4 + a_3 s - a_1 t - 2st, \nonumber\\
	a_6 & \mapsto a_6 - a_3 t - t^2. \label{right-unit}
    \end{align}
\end{prop}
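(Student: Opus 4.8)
The plan is to keep the Weierstrass atlas $\spec D \to \Mcub$ essentially unchanged and to observe that it refines \emph{canonically} to an atlas of $\Mcub^\dR$; the only thing that changes is the automorphism groupoid, which will cut down from $\Gamma$ to $\Sigma$ precisely by forcing $r=0$. Concretely, the universal Weierstrass curve \eqref{weierstrass} over $\spec D$ carries the coordinate $x$, a function with a double pole along $\infty$, and hence (via Proposition \ref{katz-prop}) the section $x\,\omega^{\otimes 2}$ of $f_\ast \Omega^1_{E/S}(\infty)^{\otimes 2}$, whose residue at $\infty$ is a generator of $\co_S$. This is a canonical splitting of \eqref{hdr-redux}, so the classifying map $\spec D \to \Mcub$ lifts to a map $g:\spec D \to \Mcub^\dR$. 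I would then argue that $g$ is again a smooth atlas: smoothness is inherited because $\Mcub^\dR \to \Mcub$ is representable and smooth (it is a torsor under the additive group of the line bundle $\omega^{\otimes 2}$), and surjectivity follows from Step~2 below, which shows every cubic curve with a splitting is Zariski-locally Weierstrass with the splitting realized by its $x$-coordinate.

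The heart of the argument, and the step I expect to be the main obstacle, is making precise the dictionary \emph{(curve $+$ splitting) $\leftrightarrow$ (curve $+$ choice of $x$)}. Using Proposition \ref{katz-prop} to rewrite \eqref{hdr-redux} as the residue sequence $0 \to \omega^{\otimes 2} \to f_\ast \Omega^1_{E/S}(\infty)^{\otimes 2} \xrightarrow{\mathrm{res}} \co_S \to 0$, a splitting is a section $\sigma$ of $\mathrm{res}$, determined by $\sigma(1)$: a global section of residue $1$ with a double pole at $\infty$. By the Riemann--Roch computation cited in the text (the space of functions with at most a double pole at $\infty$ is locally free of rank $2$, spanned by $1$ and $x$), any such $\sigma(1)$ equals $x\,\omega^{\otimes 2}$ for a \emph{unique} double-pole function $x$, pinned down by the normalization $\mathrm{res}=1$. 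The set of splittings is a torsor under $\Hom(\co_S,\omega^{\otimes 2})=\H^0(S,\omega^{\otimes 2})$, and this torsor structure matches the ambiguity $x\mapsto x+r$, where $r$ has weight $2$, i.e.\ is a section of $\omega^{\otimes 2}$. Thus fixing the splitting is exactly fixing $x$, while $y$ remains unconstrained; once this residue/$x$ correspondence is in place, the remainder is formal.

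With the dictionary established, I would read off the groupoid of $g$. Two points of $\spec D$ are identified in $\Mcub^\dR$ iff there is a coordinate change \eqref{weierstrass} between the corresponding Weierstrass curves that preserves the splitting, i.e.\ preserves $x$. Since $x\mapsto x+r$, this forces $r=0$ and leaves $(s,t)$ free, so the groupoid is $\spec \Sigma$ with $\Sigma=D[s,t]$ and $\Mcub^\dR = [\spec D / \Sigma]$. The right unit of $(D,\Sigma)$ is the composite of $\eta_R^\Gamma$ with the quotient $\Gamma \to \Gamma/(r)=\Sigma$, that is, the formulas for $\eta_R$ on $\Gamma$ with $r$ set to $0$; substituting $r=0$ into the five $a_i$-transformations reproduces \eqref{right-unit}, and the gradings $|a_i|=i$, $|s|=1$, $|t|=3$ are inherited unchanged. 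As a consistency check I would finally note that this presentation realizes $\Mcub^\dR \to \Mcub$ as the $\omega^{\otimes 2}$-torsor whose class in $\H^1(\Mcub;\omega^{\otimes 2})=\Ext^{1,2}_\Gamma(D,D)$ is the cobar class $[r]$, in agreement with the earlier assertion that $[r]$ detects the extension \eqref{hdr-redux}.
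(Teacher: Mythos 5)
Your proposal is correct and follows essentially the same route as the paper, which justifies the proposition by the same chain of observations: Riemann--Roch pins down $x$ as the unique non-constant double-pole function, Proposition \ref{katz-prop} converts the Hodge--de Rham sequence into the residue sequence so that a splitting is exactly a choice of $x$, and the groupoid is then cut down by imposing $r=0$ (which indeed reproduces \eqref{right-unit} upon substitution). Your write-up merely makes explicit the torsor structure under $\omega^{\otimes 2}$ and the atlas verification that the paper leaves implicit, which is a reasonable elaboration rather than a different argument.
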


\section{The relationship with $\tmf \mmod \nu$}\label{tmf-A-hodge}

In this section, we study the $\E{1}$-quotient $\tmf\mmod\nu$ of $\tmf$ by
$\nu$, and relate its Adams--Novikov spectral sequence to the cohomology of
$\Mcub^\dR$.

We begin by recalling one construction of the $\E{1}$-quotient $\tmf\mmod\nu$.
This satisfies the following universal property: if $R$ is any
$\E{1}$-$\tmf$-algebra, then
$$\Map_{\Alg_{\E{1}}(\Mod(\tmf))}(\tmf\mmod\nu, R) = \begin{cases}
    \Omega^{\infty+4} R & \text{if }\nu = 0\in \pi_3 R\\
    \emptyset & \text{else}.
\end{cases}$$
One of the main results of \cite{barthel-thom} justifies the following
definition:
\begin{definition}
    The $\E{1}$-ring $\tmf\mmod\nu$, called \emph{topological quasimodular
    forms} (see Remark \ref{name-quasimodular} for a justification for the
    name), is the Thom spectrum of the dotted extension in the following
    diagram:
    $$\xymatrix{
	S^4 \ar[r]^-\nu \ar[d] & B\GL_1(\tmf)\\
	\Omega S^5 \ar@{-->}[ur] & 
    }$$
\end{definition}
\begin{remark}\label{thom-base}
    The element $\nu\in \pi_3(\tmf)$ is spherical, and so this diagram factors
    as
    $$\xymatrix{
	S^4 \ar[r]^-{2v_1^2} \ar[d] & \BSpin \ar[r]^-J & B\GL_1(\S) \ar[d]\\
	\Omega S^5 \ar@{-->}[ur] & & B\GL_1(\tmf).
    }$$
    Following the notation of \cite{tmf-witten}, we will write $A$ to denote the
    Thom spectrum of the loop map $\Omega S^5\to B\GL_1(\S)$. Then there is a
    canonical equivalence $\tmf\mmod\nu \simeq \tmf \otimes A$ of
    $\E{1}$-$\tmf$-algebras, so there is in particular a ring map $A\to
    \tmf\mmod\nu$. 
\end{remark}
The $\E{1}$-ring $A$ will be useful below. In \cite{tmf-witten}, it is shown
that there is an $\E{1}$-map $A\to \BPP$. Moreover, the $\BPP$-homology of $A$
at the prime $2$ is isomorphic to $\BPP_\ast[y_2]$, where $y_2$ is sent to
$t_1^2$ modulo decomposables under the map $\BPP_\ast(A)\to \BPP_\ast(\BPP)$. In
particular, $\H_\ast(A;\FF_2) \cong \FF_2[\zeta_1^4]$. One then has:
\begin{prop}\label{A-bp}
    There is a nontrivial simple $2$-torsion element $\sigma_1\in \langle \eta,
    \nu, 1_A\rangle \subseteq \pi_5(A) \cong \pi_5(C\nu)$ specified up to
    indeterminacy by the relation $\eta\nu = 0$.  One choice of this element is
    represented by $[t_2]$ in the Adams--Novikov spectral sequence for $A$, and
    by $h_{21}$ in the (mod $2$) Adams spectral sequence for $A$.
\end{prop}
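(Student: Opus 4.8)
The plan is to split the statement into a homotopy-theoretic part—that $\sigma_1$ is a well-defined nonzero element of order two—and two parallel spectral-sequence computations identifying its detecting classes. First I would compute $\pi_5(A)$. Since the cells of $A$ occur in dimensions $0,4,8,\dots$, the inclusion $C\nu\hookrightarrow A$ of the bottom two cells is $7$-connected, so $\pi_5(A)\cong\pi_5(C\nu)$. Applying $\pi_5$ to the cofiber sequence $S^3\xrightarrow{\nu}\S\xrightarrow{i}C\nu\xrightarrow{p}S^4$ and using $\eta\nu=0$ shows that $p_*\colon\pi_5(C\nu)\xrightarrow{\cong}\pi_5(S^4)=\Z/2\langle\eta\rangle$. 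The bracket $\langle\eta,\nu,1_A\rangle$ is defined because $\eta\nu=0$ in $\pi_\ast\S$ and $\nu\cdot 1_A=0$ in $\pi_3(A)$, the latter witnessed by the canonical nullhomotopy coming from the attaching of the $4$-cell of $A$. Extending $1_A$ over $C\nu$ gives back the inclusion $\bar 1\colon C\nu\hookrightarrow A$, and a coextension of $\eta$ along $p$ is precisely a generator $\tilde\eta\in\pi_5(C\nu)$ with $p_*\tilde\eta=\eta$; then $\bar 1_*\tilde\eta\in\langle\eta,\nu,1_A\rangle$. As $\bar 1_*$ is an isomorphism on $\pi_5$, this representative generates $\pi_5(A)\cong\Z/2$, so $\sigma_1$ is a nontrivial simple $2$-torsion element. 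The indeterminacy is $\eta\cdot\pi_4(A)+\pi_5^s\cdot 1_A$; the second summand vanishes since $\pi_5^s=0$, and the first vanishes because $\pi_4(A)\cong\pi_4(C\nu)\cong\Z$ maps to $24\,\iota_{S^4}$ under $p_*$, whence $p_*(\eta\cdot\pi_4(A))=24\eta=0$, and injectivity of $p_*$ on $\pi_5(A)$ forces $\eta\cdot\pi_4(A)=0$. Thus $\sigma_1$ is in fact the unique nonzero element of the bracket.

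For the Adams detection I would compute directly in the cobar complex of the $\cA_\ast$-comodule $\H_\ast(A;\FF_2)=\FF_2[a_4]$, where $a_4$ is the degree-$4$ generator mapping to $\zeta_1^4$. The operation $\Sq^4$ in $C\nu$ that is dual to $\nu$ forces the coaction $\bar\psi(a_4)=\zeta_1^4\otimes 1$. Combined with the Milnor coproduct $\bar\Delta(\zeta_2^2)=\zeta_1^2\otimes\zeta_1^4$, this gives $d[\zeta_2^2]=[\zeta_1^2\,|\,\zeta_1^4]=d[\zeta_1^2\,|\,a_4]$, so that
\[
z:=[\zeta_2^2]+[\zeta_1^2\,|\,a_4]
\]
is a cocycle representing a class in $\Ext^{1,6}_{\cA}(\H^\ast(A;\FF_2),\FF_2)$ whose leading term, modulo the ideal of classes involving positive powers of $a_4$, is $h_{21}=[\zeta_2^2]$. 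This $z$ is exactly the Massey product $\langle h_1,h_2,1\rangle$ of the classes detecting $\eta$ and $\nu$, the algebraic shadow of the Toda bracket; here $h_2=[\zeta_1^4]$ becomes null in $A$ because $d(a_4)=[\zeta_1^4]$.

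The Adams--Novikov detection is entirely parallel, using the input recorded in the text that $y_2\mapsto t_1^2$ modulo decomposables, i.e.\ $\bar\psi(y_2)=t_1^2\otimes 1$ modulo terms in the $v_i$. Working $2$-locally and using $\bar\Delta(t_2)=t_1^2\otimes t_1$ (modulo $v_i$-corrections), one assembles a cocycle $z'$ with leading term $[t_2]$ together with lower corrections involving $[t_1^3]$ and $[t_1\,|\,y_2]$; this represents the Massey product $\langle[t_1],[t_1^2],1\rangle$ in $\Ext_{\BPP_\ast\BPP}(\BPP_\ast,\BPP_\ast A)$, where $[t_1^2]$, the leading term of the Adams--Novikov class detecting $\nu$, is cobounded by $y_2$. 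I would then invoke Moss's convergence theorem to conclude that the Toda bracket $\sigma_1=\langle\eta,\nu,1_A\rangle$ is detected by these Massey products, so that $\sigma_1$ is represented by $[t_2]$ in the Adams--Novikov spectral sequence and by $h_{21}$ in the Adams spectral sequence.

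The main obstacle is this last identification. One must check the hypotheses of Moss's theorem—that there are no crossing differentials into or out of the relevant tridegrees—so that the algebraically computed Massey products genuinely detect the homotopy bracket, and one must verify that $z$ and $z'$ are nonzero permanent cycles rather than coboundaries or targets of differentials. Because $\pi_5(A)\cong\Z/2$ is concentrated in a single Adams (resp.\ Adams--Novikov) filtration, once $z$ (resp.\ $z'$) is shown to survive and be nonzero on $E_\infty$ it must detect the unique nonzero class $\sigma_1$; the remaining work is in controlling the $v_i$-corrections to the Adams--Novikov representative and confirming the filtration in which the $\Z/2$ lives.
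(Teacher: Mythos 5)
Your argument is correct and is, as far as one can tell, exactly what the paper has in mind: the paper states Proposition \ref{A-bp} with no written proof at all, placing it immediately after the recollection that $\BPP_\ast(A)\cong \BPP_\ast[y_2]$ with $y_2\mapsto t_1^2$ modulo decomposables and that $\H_\ast(A;\FF_2)\cong\FF_2[\zeta_1^4]$, which are precisely the inputs you use. Your write-up supplies the details the paper leaves implicit (the identification $\pi_5(A)\cong\pi_5(C\nu)\cong\Z/2$ via the cofiber sequence, the vanishing of the indeterminacy, and the Massey-product/Moss identification of the detecting classes $[\zeta_2^2]+[\zeta_1^2|a_4]$ and $[t_2]+\cdots$), with only cosmetic slips such as the order of the tensor factors in $\bar\Delta(t_2)\equiv t_1\otimes t_1^2$ and the mixed homology/cohomology $\Ext$ notation.
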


To connect $\tmf\mmod\nu$ and Hodge theory for cubic curves, we make the
following observation. Recall that $\H^1_\dR(\ce/\Mcub)\in
\Ext^1_{\Mcub}(\omega^{-1}, \omega)$.
\begin{prop}\label{hodge-nu}
    Let $f:\ce\to \Mcub$ denote the universal cubic curve over the moduli stack
    of cubic curves. Then $\H^1_\dR(\ce/\Mcub)\in \Ext^1_{\Mcub}(\omega^{-1},
    \omega) \cong \H^1(\Mcub;\omega^2)$ detects $\nu$ in the $E_2$-page of the
    descent spectral sequence for $\tmf$.
\end{prop}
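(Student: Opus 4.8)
The plan is to reduce the assertion to a concrete computation in the cobar complex of the Hopf algebroid $(D,\Gamma)$ presenting $\Mcub$, and then to pin down the relevant class by comparison with the sphere. By the discussion preceding Proposition \ref{de-rham-presentation}, together with Proposition \ref{katz-prop}, the extension class $\H^1_\dR(\ce/\Mcub)\in \Ext^1_{\Mcub}(\omega^{-1},\omega)\cong \H^1(\Mcub;\omega^{\otimes 2})$ is represented by the cobar cocycle $[r]$: the element $r$ is primitive for the comultiplication, since composing two coordinate changes $(x,y)\mapsto(x+r,\,y+sx+t)$ simply adds the $r$-parameters (whereas $t$ acquires a cross term and is not primitive). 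It therefore suffices to show that $[r]$ detects $\nu$ in the descent spectral sequence $E_2^{s,t}=\H^s(\Mcub;\omega^{\otimes t/2})\Rightarrow \pi_{t-s}\tmf$. First I would record that $\nu$ is necessarily detected in filtration exactly $1$: in total degree $3$ the only contributing terms $E_2^{s,s+3}$ are those with $s$ odd (so that the weight $(s+3)/2$ is an integer), and the lowest of these is $E_2^{1,4}=\H^1(\Mcub;\omega^{\otimes 2})$.

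Next I would compute $\H^1(\Mcub;\omega^{\otimes 2})$ directly from the right unit of $(D,\Gamma)$. The relation $a_1\mapsto a_1+2s$ gives $d(a_1)=2s$, and $a_2\mapsto a_2-a_1 s+3r-s^2$ gives $d(a_2)=3r-a_1 s-s^2$; hence in cohomology $3[r]=[a_1 s+s^2]$. Carrying out the remaining bookkeeping in the cobar complex, one finds $\H^1(\Mcub;\omega^{\otimes 2})\cong \Z/24$, generated by $[r]$. This is precisely the classical computation underlying $\pi_3\tmf$: the factor of $3$ is visible in the relation above, and the factor of $8$ comes from the $2$-primary relations, which force the class $[a_1 s+s^2]=3[r]$ to have order exactly $8$.

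Finally I would conclude. Since $\pi_3\tmf\cong \Z/24$ is generated by $\nu$ and, by the filtration count, lies entirely in filtration $1$, the term $E_2^{1,4}$ supports no differentials and receives none for degree reasons, so $E_\infty^{1,4}\cong \H^1(\Mcub;\omega^{\otimes 2})\cong \Z/24$ and the edge homomorphism carries $\nu$ to a generator. As $[r]$ generates this cyclic group, $\nu$ is detected by $[r]$ up to a unit in $(\Z/24)^\times$. To remove the ambiguity in the unit and match $[r]$ with $\nu$ on the nose, I would compare with the Adams--Novikov spectral sequence for the sphere along the unit map $\S\to\tmf$: the generator of $\Ext^{1,4}_{\MU_\ast\MU}(\MU_\ast,\MU_\ast)\cong\Z/24$ detecting $\nu\in\pi_3\S$ maps to $[r]$ under the map of Hopf algebroids $\Mcub\to\Mfg$ classifying the formal group of the universal Weierstrass curve, since the universal additive coordinate change restricts there to the translation $x\mapsto x+r$.

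The hard part will be the $2$-primary input to the middle step: establishing that $[r]$ has order exactly $8$ at the prime $2$ (equivalently, that no relation in the cobar complex for $(D,\Gamma)$ raises or lowers its order), and confirming that $\nu$ does not leak into higher filtration. Both are ultimately controlled by the explicit right unit of $(D,\Gamma)$ and by the sphere comparison, under which the order-$8$ class is the familiar image-of-$J$ generator in $\pi_3\S$.
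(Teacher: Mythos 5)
Your overall strategy is the same as the paper's: reduce to the identification of the extension class of \eqref{hdr-redux} with the cobar cocycle $[r]$ (via Proposition \ref{katz-prop} and the discussion preceding Proposition \ref{de-rham-presentation}), and then argue that $[r]$ generates $\H^1(\Mcub;\omega^{\otimes 2})$ and detects $\nu$. The paper does exactly this, citing \cite{bauer-tmf} for the computation of $\H^1(\Mcub;\omega^{\otimes 2})$ and \cite{hopkins-icm} for the detection statement; your comparison with the sphere along $\Mcub\to\Mfg$ to pin down the generator is a sensible supplement that the paper does not spell out.

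However, the computation at the heart of your argument is wrong: $\H^1(\Mcub;\omega^{\otimes 2})$ is $\Z/12$, not $\Z/24$ (this is the calculation in \cite{bauer-tmf} that the paper quotes). Consequently your claim that $\pi_3\tmf\cong\Z/24$ ``lies entirely in filtration $1$'' fails, and the ``hard part'' you flag --- showing that $[r]$ has order exactly $8$ at the prime $2$ --- is a false statement: $2$-locally $[r]$ has order $4$. The missing factor of $2$ is accounted for by the relation $4\nu=\eta^3$, which is detected by $h_1^3\in E_2^{3,6}=\H^3(\Mcub;\omega^{\otimes 3})$; that is, the subgroup $\{0,4\nu\}\subset\pi_3\tmf_{(2)}\cong\Z/8$ sits in filtration $3$, so the associated graded of $\pi_3\tmf$ is $\Z/12$ in filtration $1$ plus $\Z/2$ in filtration $3$, with a nontrivial additive extension. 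Your degree-parity argument only rules out filtration $0$; it does not by itself exclude the possibility that part of $\pi_3\tmf$ leaks into filtration $3$, which is precisely what happens. The conclusion of the proposition survives this correction --- $\nu\notin F^3\pi_3\tmf$ since $E_\infty^{3,6}$ has order $2$ while $\nu$ has order $24$, so $\nu$ is still detected by a generator of $E_2^{1,4}\cong\Z/12$, and $[r]$ is such a generator --- but as written your middle step would have you trying to prove something false, and the edge-homomorphism argument as stated does not go through.
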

\begin{proof}
    This is essentially argued in \cite[Section 5.2]{hopkins-icm}. We know that
    $\H^1(\Mcub; \omega^2) = \Z/12$ by the calculations in \cite{bauer-tmf}; the
    element $[r]$ in the cobar complex determined by the Hopf algebroid
    $(D,\Gamma)$ is a representative for the generator. This element detects
    $\nu$ in the Adams--Novikov spectral sequence for $\tmf$, and by the
    discussion before Proposition \ref{de-rham-presentation}, also detects the
    extension class of the Hodge--de Rham exact sequence.
\end{proof}
\begin{corollary}
    The rank two vector bundle $\cf(C\nu)$ on the moduli stack of cubic curves
    corresponding to $C\nu$ is isomorphic to $\H^1_\dR(\ce/\Mcub)$.
\end{corollary}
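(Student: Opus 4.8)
The plan is to realize $\cf(C\nu)$ as an extension of line bundles on $\Mcub$ whose class is forced to coincide with the Hodge--de Rham extension class, and then to appeal to the uniqueness of extensions with a prescribed class. The cofiber sequence $S^3 \xar{\nu} S^0 \to C\nu$ exhibits $C\nu$ as a two-cell complex with cells in dimensions $0$ and $4$. Applying the functor $\cf$, which sends a finite spectrum to its associated quasicoherent sheaf on $\Mcub$ under the descent structure for $\tmf$, produces a short exact sequence of sheaves
\begin{equation}\label{cnu-ses}
    0\to \cf(S^4) \to \cf(C\nu)\to \cf(S^0)\to 0,
\end{equation}
with no higher terms since there are only two cells. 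First I would identify the associated graded: under the descent spectral sequence a sphere of even dimension corresponds to a power of $\omega$ (the topological grading being double the algebraic one), and with the normalization built into $\cf$ the outer terms of \eqref{cnu-ses} are $\cf(S^4)\cong\omega$ and $\cf(S^0)\cong\omega^{-1}$, so that \eqref{cnu-ses} reads $0\to\omega\to\cf(C\nu)\to\omega^{-1}\to 0$. This has exactly the shape of the Hodge--de Rham sequence \eqref{hdr} for $\H^1_\dR(\ce/\Mcub)$, and its class therefore lives in the same group $\Ext^1_{\Mcub}(\omega^{-1},\omega)\cong\H^1(\Mcub;\omega^2)$.

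Next I would pin down the extension class of \eqref{cnu-ses}. The general principle is that for a two-cell complex the attaching map, when detected in filtration $1$ of the descent spectral sequence, gives precisely the class of the resulting extension of sheaves; concretely, the edge-type map $\pi_3(\tmf)\to\Ext^1_{\Mcub}(\omega^{-1},\omega)$ carries $\nu$ to the class of $\cf(C\nu)$. Thus the extension class of $\cf(C\nu)$ is exactly the element of $\H^1(\Mcub;\omega^2)$ detecting $\nu$. By Proposition \ref{hodge-nu}, the Hodge--de Rham class of $\H^1_\dR(\ce/\Mcub)$ is this same detecting class of $\nu$. Hence $\cf(C\nu)$ and $\H^1_\dR(\ce/\Mcub)$ are two extensions of $\omega^{-1}$ by $\omega$ with equal class in $\Ext^1_{\Mcub}(\omega^{-1},\omega)$.

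Finally, two extensions of the same pair of line bundles with the same class in $\Ext^1$ are isomorphic (and for the weaker conclusion of an isomorphism of underlying vector bundles, agreement of the classes up to a unit, e.g.\ a sign arising from an automorphism of $\omega$ or $\omega^{-1}$, already suffices). This yields the desired isomorphism $\cf(C\nu)\cong\H^1_\dR(\ce/\Mcub)$. The main obstacle I anticipate is the second step: making rigorous the claim that the attaching map $\nu$ maps, under the descent spectral sequence, to the extension class of $\cf(C\nu)$ on the nose. This requires a careful comparison of the filtration-$1$ structure of $C\nu$ with the cobar representative $[r]$ discussed before Proposition \ref{de-rham-presentation}, together with a check that the normalization fixing $\cf(S^4)\cong\omega$ and $\cf(S^0)\cong\omega^{-1}$ (rather than a twist thereof) is the one under which both extensions live in $\Ext^1_{\Mcub}(\omega^{-1},\omega)$; the identification $\H^1_\dR\otimes\omega\cong f_\ast\Omega^1_{E/S}(\infty)^{\otimes 2}$ of Proposition \ref{katz-prop} and the sequence \eqref{hdr-redux} are the tools I would use to keep the twists consistent.
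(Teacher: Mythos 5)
Your argument is correct and is essentially the paper's own: the corollary is stated there as an immediate consequence of Proposition \ref{hodge-nu}, with exactly the reading you give, namely that the two-cell structure of $C\nu$ exhibits $\cf(C\nu)$ as an extension of $\omega^{-1}$ by $\omega$ whose class in $\Ext^1_{\Mcub}(\omega^{-1},\omega)\cong \H^1(\Mcub;\omega^{\otimes 2})$ is the class detecting the attaching map $\nu$, which Proposition \ref{hodge-nu} identifies with the Hodge--de Rham class. One bookkeeping slip to fix: in the homological normalization used here the bottom cell $S^0$ contributes the subsheaf (namely $\omega$, in weight one) and the top cell $S^4$ contributes the quotient $\omega^{-1}$, so the short exact sequence should run $0\to\cf(S^0)\to\cf(C\nu)\to\cf(S^4)\to 0$ with your two identifications swapped; the resulting sequence $0\to\omega\to\cf(C\nu)\to\omega^{-1}\to 0$ and the remainder of your argument are unaffected.
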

\begin{remark}\label{twisting}
    In \cite[Section 11.5]{rezk-unpublished}, the Hodge--de Rham exact sequence
    appears in a different but related guise, as a class in the $E_2$-page of a
    spectral sequence $\{E_r^{s,t}\}$ converging to the homotopy groups of the
    space of $\Eoo$-maps $\Z_+ \to \TMF$. The element $\H^1_\dR(\ce/\Mell)\in
    E_2^{1,4}$ detects a nontrivial class in $\pi_3 \Map_{\Eoo}(\Z_+, \TMF) =
    \pi_3 \GG_m(\TMF)$, i.e., an $\Eoo$-map $K(\Z,3)_+ \to \TMF$. This is
    related to the $\Eoo$-twisting of $\TMF$ explored in \cite{twist-tmf}.
\end{remark}

Since $\tmf\mmod\nu$ is the $\E{1}$-quotient of $\tmf$ by $\nu$ by Remark
\ref{thom-base}, it is the universal $\E{1}$-$\tmf$-algebra with a nullhomotopy
of $\nu$. If $\tmf \mmod\nu$ is a homotopy commutative ring (which we will show
is indeed the case in Corollary \ref{tmf-htpy-comm}), then we would be able to
consider the stack associated to $\tmf \mmod\nu$ (in the sense of \cite[Chapter
9]{tmf}, \cite[Section 2.1]{homologytmf}), and it would be reasonable to expect
that Proposition \ref{hodge-nu} implies that this stack is the moduli of cubic
curves with a choice of splitting of the Hodge--de Rham spectral sequence. We
have:
\begin{theorem}\label{nu-theorem}
    Let $g:\Mcub^\dR\to \Mcub$ denote the structure morphism. Then the sheaf on
    $\Mcub$ associated to $A$ is isomorphic as an algebra to the pushforward
    $g_\ast \co_{\Mcub^\dR}$.
\end{theorem}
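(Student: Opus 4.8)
The plan is to verify the asserted isomorphism after restricting both sheaves of algebras along the Weierstrass atlas $q\colon \spec(D) \to \Mcub$, and then to match the resulting descent (comodule) data. Over this atlas, both $g_\ast \co_{\Mcub^\dR}$ and $\cf(A)$ become free polynomial algebras on a single generator of weight $2$, and in each case the sheaf is the coordinate ring of a torsor on $\Mcub$ under the additive group scheme attached to $\omega^2$. Torsors of this type are classified by $\H^1(\Mcub;\omega^2) \cong \Z/12$, so the whole theorem reduces to showing that the two classifying classes coincide; I will argue that both are the generator detected by $\nu$.

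First I would compute $g_\ast\co_{\Mcub^\dR}$ over the atlas. The morphism $g$ is induced by the map of Hopf algebroids $(D,\Gamma) \to (D,\Sigma)$ of Proposition \ref{de-rham-presentation}, i.e.\ by setting $r=0$; geometrically $\Mcub^\dR \to \Mcub$ is the bundle of splittings of the Hodge--de Rham exact sequence \eqref{hdr-redux}, which is a torsor under $\omega^2$. Since the universal Weierstrass curve over $\spec(D)$ carries the tautological coordinate $x$ (equivalently, the tautological splitting $r=0$), this torsor is trivialized after pullback along $q$, so that $q^\ast g_\ast\co_{\Mcub^\dR} \cong D[r]$, where $r$ is the weight-$2$ affine coordinate recording the deviation from the tautological splitting. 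The descent datum is the additive cocycle recording the transformation law of $r$ in $\Gamma$, so $g_\ast\co_{\Mcub^\dR}$ is the coordinate ring of the splitting torsor, whose class in $\H^1(\Mcub;\omega^2)$ is the Hodge--de Rham extension class. By Proposition \ref{hodge-nu} this class detects $\nu$.

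Next I would compute $\cf(A)$ over the atlas. By construction the quasicoherent sheaf $\cf(A)$ pulls back along $q$ to the elliptic homology $D \otimes_{\BPP_\ast} \BPP_\ast(A)$ of $A$, and since $\BPP_\ast(A) \cong \BPP_\ast[y_2]$ with $y_2$ of weight $2$, this is $D[y_2]$. The $\Gamma$-comodule structure on $D[y_2]$ is the pushforward along $\BPP_\ast\BPP \to \Gamma$ of the $\BPP_\ast\BPP$-coaction on $y_2$; for weight reasons (any term must be $1\otimes y_2$ or else constant in $y_2$, since $y_2^2$ has weight $4$) this coaction is affine-linear in $y_2$, with no quadratic or higher terms. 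Hence $\cf(A)$ is again the coordinate ring of an $\omega^2$-torsor on $\Mcub$, whose classifying class is the primitive part of the coaction on $y_2$. Because $y_2 \mapsto t_1^2$ modulo decomposables, this class is $[t_1^2]$, which by the discussion preceding Proposition \ref{A-bp} detects $\nu$ in the Adams--Novikov spectral sequence.

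The two torsors therefore have classifying classes in $\H^1(\Mcub;\omega^2) \cong \Z/12$ that are both the generator detected by $\nu$, hence are equal, and the resulting isomorphism of torsors yields the desired algebra isomorphism $\cf(A) \cong g_\ast\co_{\Mcub^\dR}$. The main obstacle is precisely this last comparison of classes: one must compute the image of $t_1^2$ under the Hopf algebroid map $\BPP_\ast\BPP \to \Gamma$ classifying the formal group of the universal Weierstrass cubic, and match it in the cobar complex with the class $[r]$ of Proposition \ref{hodge-nu}. This requires careful bookkeeping of the dictionary between the $\BPP$-theoretic generators $t_i$ and the Weierstrass coordinate changes $r,s,t$, and of the decomposable and coboundary contributions (such as $s^2$-type terms) that do not affect the class in $\Ext^{1,2}_\Gamma(D,D) = \H^1(\Mcub;\omega^2)$. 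Once the two generators are identified, compatibility of the algebra structures is automatic, as both sheaves are the symmetric-algebra coordinate rings of their respective torsors.
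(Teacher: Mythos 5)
Your argument is correct, but it takes a genuinely different route from the paper's. The paper also reduces everything to the rank-two statement of Proposition \ref{hodge-nu}, but promotes it to the polynomial-algebra statement via a universal property: it introduces the free $\E{1}$-algebra functor $T$ on unital objects, observes that $A\simeq T(C\nu)$ and that $\cf$ intertwines the two versions of $T$, so that $\cf(A)\cong T(f_\ast\Omega^1_{\ce/\Mcub}(\infty)^{\otimes 2})$; the universal property then supplies a \emph{canonical} algebra map to $g_\ast\co_{\Mcub^\dR}$, which is an isomorphism because both sides are polynomial on one generator and the generator maps to the generator by Proposition \ref{katz-prop}. You instead classify both algebras as coordinate rings of $\omega^{\otimes 2}$-torsors and compare classes in $\H^1(\Mcub;\omega^{\otimes 2})\cong\Z/12$. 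This works, at the cost of producing only a non-canonical isomorphism and of verifying by hand what the universal property gives for free: that the coaction on $y_2$ is affine-linear \emph{with linear coefficient equal to $1$} (the latter follows from $\Gamma$ being $\Z$ in degree $0$ together with the counit condition, and should be said, since otherwise you only get an affine bundle rather than a torsor). Two further remarks. First, the ``main obstacle'' you flag at the end is not actually outstanding: the primitive part of the coaction on $y_2$ is the extension class of the comodule $\cf(C\nu)$, which by the geometric boundary theorem is precisely the class detecting $\nu$ in the descent spectral sequence, and Proposition \ref{hodge-nu} already identifies that class with $[r]$, i.e.\ with the Hodge--de Rham class classifying the splitting torsor; no independent computation of the image of $t_1^2$ in $\Gamma$ is required. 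Second, your use of $\BPP$ is implicitly $2$-local, whereas the theorem is integral; replace $\BPP_\ast(A)$ by $\MU_\ast(A)\cong\MU_\ast[y_2]$ (which holds because smashing with $\MU$ preserves the free $\E{1}$-quotient construction), and the rest of your argument goes through unchanged.
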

\begin{proof}
    Let $\cC$ be a presentable symmetric monoidal ($\infty$-)category, and let
    $T$ denote the functor $\cC^\mathrm{unital}\to \Alg_{\E{1}}(\cC)$ sending a
    unital object $i:\mathbf{1}\to X$ to the free $\E{1}$-algebra in $\cC$ whose
    unit factors through $i$: this may be defined via the homotopy pushout
    $$\xymatrix{
	\mathrm{Free}_{\E{1}}(\mathbf{1}) \ar[r] \ar[d] & \mathbf{1} \ar[d]\\
	\mathrm{Free}_{\E{1}}(X) \ar[r] & T(X)
    }$$
    in $\Alg_{\E{1}}(\cC)$. The functor $\cf(-)$ from the homotopy category of
    $\tmf$-modules to $\QCoh(\Mcub)$ fits into a commutative diagram
    $$\xymatrix{
	h\Mod(\tmf)^\mathrm{unital} \ar[r] \ar[d]^-T & h\Alg_{\E{1}}(\Mod(\tmf))
	\ar[d]^-T\\
	\QCoh(\Mcub)^\mathrm{unital} \ar[r] & \Alg_\mathrm{assoc}(\QCoh(\Mcub)).
    }$$
    It is easy to see by the universal property of $T$ that $T(C\nu) \simeq A$,
    so it follows from Proposition \ref{hodge-nu} that $\cf(A) \cong T(f_\ast
    \Omega^1_{\ce/\Mcub}(\infty)^{\otimes 2})$. It therefore suffices to show
    that $T(f_\ast \Omega^1_{\ce/\Mcub}(\infty)^{\otimes 2}) \cong g_\ast
    \co_{\Mcub^\dR}$.  This is not immediate, since the sheaf $T(f_\ast
    \Omega^1_{\ce/\Mcub}(\infty)^{\otimes 2})$ satisfies a universal property
    with respect to {associative} $\co_{\Mcub}$-algebras. Nonetheless, its
    universal property defines an algebra map $\varphi:T(f_\ast
    \Omega^1_{\ce/\Mcub}(\infty)^{\otimes 2}) \to g_\ast \co_{\Mcub^\dR}$ of
    sheaves on $\Mcub$. To check that this is an isomorphism, it suffices to
    show by \cite[Lemma 4.4]{homologytmf} that the map induces an isomorphism on
    the cuspidal cubic over $\FF_p$ for all primes $p$.
    
    In fact, we can check this over $\Z$. We know that $g_\ast \co_{\Mcub^\dR}$
    is isomorphic to a polynomial ring on a single generator: indeed, the space
    of all splittings of the Hodge filtration is isomorphic to the projective
    line $\PP(\H^1_\dR(\ce))$ minus the point corresponding to the line
    determined by the invariant differentials. Moreover, $T(f_\ast
    \Omega^1_{\ce/\Mcub}(\infty)^{\otimes 2})$ is isomorphic to a free
    associative ring on one generator, which is also a polynomial ring on a
    single generator. It therefore suffices to show that the generator maps to
    the generator under the algebra map $\varphi$, but this (after unwinding) is
    precisely the fact (Proposition \ref{katz-prop}) that $\H^1_\dR(\ce/\Mcub)
    \otimes \omega = f_\ast \Omega^1_{\ce/\Mcub} (\infty)^{\otimes 2}$.
\end{proof}
\begin{corollary}\label{nu-cor}
    There is a descent spectral sequence
    $$E_2^{s,2t} = \H^s({\Mcub^\dR}; g^\ast \omega^{\otimes t}) \Rightarrow
    \pi_{2t-s}(\tmf \mmod\nu),$$
    and it is isomorphic to the Adams--Novikov spectral sequence for $\tmf
    \mmod\nu$.
\end{corollary}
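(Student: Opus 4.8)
The plan is to deduce the corollary from the general descent spectral sequence for $\tmf$-modules, together with Theorem \ref{nu-theorem} and the affineness of $g$. Recall that to a $\tmf$-module $M$ one associates the quasi-coherent sheaf $\cf(M)$ on $\Mcub$, and that there is a descent spectral sequence
$$E_2^{s,2t} = \H^s(\Mcub; \cf(M)\otimes\omega^{\otimes t}) \Rightarrow \pi_{2t-s}(M)$$
which, under the identification of the Weierstrass Hopf algebroid $(D,\Gamma)$ with the relevant Adams--Novikov cobar complex (see \cite{bauer-tmf}, \cite{tmf}, \cite{homologytmf}), agrees with the Adams--Novikov spectral sequence for $M$. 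For the $E_2$-page to take this tensored-off form I will use flatness: the sheaf $\cf(A)$ is flat over $\co_{\Mcub}$ because, by Theorem \ref{nu-theorem}, it is the pushforward $g_\ast\co_{\Mcub^\dR}$ of the structure sheaf along the affine morphism $g$, and equivalently, on $\BPP$-homology, because $\BPP_\ast A\cong\BPP_\ast[y_2]$ is free over $\BPP_\ast$, as recalled before Proposition \ref{A-bp}.

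First I would apply this to $M = \tmf\mmod\nu$. By Remark \ref{thom-base} we have $\tmf\mmod\nu\simeq\tmf\otimes A$, so the associated sheaf $\cf(\tmf\mmod\nu)$ is precisely the sheaf $\cf(A)$ of Theorem \ref{nu-theorem}, which that theorem identifies with $g_\ast\co_{\Mcub^\dR}$. The descent spectral sequence above therefore reads
$$E_2^{s,2t} = \H^s(\Mcub; g_\ast\co_{\Mcub^\dR}\otimes\omega^{\otimes t}) \Rightarrow \pi_{2t-s}(\tmf\mmod\nu).$$

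Next I would rewrite this $E_2$-page as cohomology over $\Mcub^\dR$. As noted in the proof of Theorem \ref{nu-theorem}, $\Mcub^\dR\to\Mcub$ is the complement of a section in the projective bundle $\PP(\H^1_\dR(\ce/\Mcub))$, hence an affine bundle; in particular $g$ is an affine morphism and $R^{i}g_\ast = 0$ for $i>0$. The Leray spectral sequence then collapses and, combined with the projection formula $g_\ast g^\ast\omega^{\otimes t}\cong\omega^{\otimes t}\otimes g_\ast\co_{\Mcub^\dR}$ (valid since $\omega^{\otimes t}$ is a line bundle), gives
$$\H^s(\Mcub; g_\ast\co_{\Mcub^\dR}\otimes\omega^{\otimes t})\cong\H^s(\Mcub^\dR; g^\ast\omega^{\otimes t}).$$
Substituting yields the stated descent spectral sequence, and the identification with the Adams--Novikov spectral sequence for $\tmf\mmod\nu$ is inherited from the general comparison above; here I use that $\tmf\mmod\nu$ is homotopy commutative (Corollary \ref{tmf-htpy-comm}), so that the module- and stack-theoretic descent constructions of \cite{tmf}, \cite{homologytmf} apply and the associated stack is $\Mcub^\dR$.

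The hard part will be the first step: verifying that the descent spectral sequence built from $\cf(\tmf\mmod\nu)$ coincides with the Adams--Novikov spectral sequence of $\tmf\mmod\nu$ as spectral sequences, not merely on $E_2$. This rests on the freeness of $\BPP_\ast A$, which both supplies the Künneth isomorphism $\BPP_\ast(\tmf\mmod\nu)\cong\BPP_\ast(\tmf)\otimes_{\BPP_\ast}\BPP_\ast(A)$ and ensures that no higher $\Tor$-terms obstruct the factorization of the $E_2$-page as $\cf(A)\otimes\omega^{\otimes\bullet}$, and it requires keeping the grading conventions straight (the topological grading being double the algebraic one). By contrast, the geometric input of the second and third steps---the affineness of $g$ and the projection formula---is routine once the identification $\cf(A)\cong g_\ast\co_{\Mcub^\dR}$ of Theorem \ref{nu-theorem} is in hand.
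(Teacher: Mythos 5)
Your proposal follows essentially the same route as the paper: start from the descent spectral sequence $\H^s(\Mcub;\cf(A)\otimes_{\co_{\Mcub}}\omega^{\otimes t})\Rightarrow\pi_{2t-s}(\tmf\mmod\nu)$, identify it with the Adams--Novikov spectral sequence (the paper does this by citing \cite{homologytmf} and \cite{wood}), and then use Theorem \ref{nu-theorem} together with the projection formula and the flatness and affineness of $g$ to rewrite the $E_2$-page as $\H^s(\Mcub^\dR;g^\ast\omega^{\otimes t})$. The extra observations about freeness of $\BPP_\ast A$ are consistent with what the paper records before Proposition \ref{A-bp} and are a reasonable way to justify the K\"unneth step that the paper delegates to its references.

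One point needs fixing: you invoke Corollary \ref{tmf-htpy-comm} (homotopy commutativity of $\tmf\mmod\nu$) to justify applying the descent machinery. This is circular, since Corollary \ref{tmf-htpy-comm} is deduced from Theorem \ref{hodge-theorem}, whose proof in turn rests on Corollary \ref{nu-cor}. Fortunately the appeal is also unnecessary: the descent spectral sequence here is the one for $\tmf$-\emph{modules} (applied to $\tmf\otimes A$ with its associated quasicoherent sheaf $\cf(A)$ on $\Mcub$), which requires no multiplicative structure on $\tmf\mmod\nu$; the reinterpretation of the $E_2$-page over $\Mcub^\dR$ is purely sheaf-theoretic via $g$, and does not require knowing that $\Mcub^\dR$ is ``the stack associated to $\tmf\mmod\nu$'' (that is Corollary \ref{stack-tmf-A}, again downstream of this result). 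Dropping that sentence leaves a correct argument matching the paper's.
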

\begin{proof}
    There is a descent spectral sequence
    $$E_2^{s,2t} = \H^s({\Mcub}; \cf(A)\otimes_{\co_{\Mcub}} 
    \omega^{\otimes t}) \Rightarrow \pi_{2t-s}(\tmf \mmod\nu),$$
    and this is isomorphic to the Adams--Novikov spectral sequence for $\tmf
    \mmod\nu$ by \cite[Corollary 5.3]{homologytmf} and the main theorem of
    \cite{wood}. Combining Theorem \ref{nu-theorem} with the projection
    isomorphism shows that $\cf(A)\otimes_{\co_{\Mcub}} \omega^{\otimes t} \cong
    g_\ast (g^\ast \omega^{\otimes t})$. The morphism $g$ is flat and affine, so
    $E_2^{s,2t} \cong \H^s({\Mcub^\dR}; g^\ast \omega^{\otimes t})$, as desired.
\end{proof}
\begin{remark}
    Corollary \ref{nu-cor} says that although $\tmf \mmod\nu$ is not \emph{a
    priori} a homotopy commutative ring, there is a descent spectral sequence
    which would exist if there was a sheaf of structured ring spectra on
    $\Mcub^\dR$ whose global sections was $\tmf \mmod\nu$. We pose this as a
    conjecture:
\end{remark}
\begin{conjecture}\label{etale-site}
    For some $k\geq 2$, there is a sheaf of even-periodic $\E{k}$-rings
    $\co^\der$ on the \'etale site of $\Mell^\dR$ such that if $f:\spec R\to
    \Mell^\dR$ is an \'etale map, then $\co^\der(f)$ is the Landweber-exact
    theory corresponding to the composite $\spec R\to \Mell^\dR\to \Mell\to
    \Mfg$, and such that the global sections $\Gamma(\Mell^\dR;\co^\der)$ is
    equivalent as an $\E{1}$-ring to $\TMF\mmod\nu$. Moreover, the resulting
    $\E{k}$-ring structure on $\TMF\mmod\nu$ extends to an $\E{k}$-ring
    structure on $\tmf \mmod\nu$.
\end{conjecture}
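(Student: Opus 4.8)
The plan is to realize $\Mell^\dR$ as the underlying classical stack of a relative affine spectral Deligne--Mumford stack over the spectral stack $(\Mell, \co^{\mathrm{top}})$, where $\co^{\mathrm{top}}$ is the Goerss--Hopkins--Miller--Lurie sheaf on $\Mell$ constructed in \cite{tmf}, and to build $\co^\der$ out of $\co^{\mathrm{top}}$ together with the fixed $\E{1}$-ring $A$ of Remark \ref{thom-base}. Since the formal group classified by the composite $\spec R \to \Mell^\dR \to \Mell \to \Mfg$ factors through $\Mell$, the underlying sheaf of spectra of the sought-for $\co^\der$ is forced to be the pullback $g^\ast \co^{\mathrm{top}}$ along $g:\Mell^\dR \to \Mell$. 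In particular, the Landweber-exactness demanded in condition (1) and the even-periodicity both hold automatically, so that condition is essentially free; the entire content of the conjecture lies in promoting $g^\ast\co^{\mathrm{top}}$ to a sheaf of $\E{k}$-rings with the correct global sections.

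First I would settle the global-sections statement (2) at the level of $\E{1}$-rings. The James filtration exhibits $A$ as a filtered colimit of Thom spectra of finite complexes over $\Omega S^5$, so tensoring $\co^{\mathrm{top}}$ sectionwise with $A$ yields a sheaf $\co^{\mathrm{top}}\otimes A$ of $\E{1}$-$\co^{\mathrm{top}}$-algebras on the \'etale site of $\Mell$ whose global sections are $\TMF \otimes A \simeq \TMF\mmod\nu$ (Remark \ref{thom-base}); convergence of the relevant descent spectral sequence is exactly the identification made in Corollary \ref{nu-cor}. By Theorem \ref{nu-theorem} the underlying $\co_{\Mell}$-algebra of this sheaf is $\cf(A) \cong g_\ast \co_{\Mell^\dR}$, so $\co^{\mathrm{top}}\otimes A$ is the natural candidate for the pushforward $g_\ast\co^\der$ of the structure sheaf we seek: it is a spectral thickening of the affine morphism $g$ whose global sections already realize $\TMF\mmod\nu$.

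The hard part is the multiplicative structure, and this is precisely why the conjecture hedges with ``for some $k\geq 2$.'' The relative spectrum of a merely $\E{1}$-algebra does not define a stack for the \'etale topology, so to descend $\co^{\mathrm{top}}\otimes A$ to an honest \'etale sheaf of $\E{k}$-rings on $\Mell^\dR$ one needs $A$ --- equivalently $\TMF\mmod\nu$ --- to carry an $\E{k}$-structure with $k\geq 2$ refining the $\E{1}$-structure from the Thom construction. This is not formal: $A$ is the Thom spectrum of $\Omega S^5$, the free $\E{1}$-space on $S^4$, so an $\E{2}$-refinement would require the nullhomotopy of $\nu$ to be compatible with an $\E{2}$-structure, which is a genuine question about the attaching map rather than a consequence of anything established so far. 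I expect this to be the main obstacle, to be attacked either by producing a direct $\E{k}$-refinement of $A$ (for instance through a higher Thom spectrum or via the $\BPP$-map of \cite{tmf-witten}), or by running Goerss--Hopkins obstruction theory over $\Mell^\dR$ and showing that the relevant André--Quillen obstruction groups --- cohomology groups of the form $\H^s(\Mell^\dR; g^\ast\omega^{\otimes t})$ already appearing in Corollary \ref{nu-cor} --- vanish through the $\E{k}$-level.

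Finally, once an $\E{k}$-sheaf $\co^\der$ on the \'etale site of $\Mell^\dR$ is in hand with $\Gamma(\Mell^\dR;\co^\der)\simeq \TMF\mmod\nu$ as $\E{k}$-rings, I would verify that the underlying classical stack of the resulting spectral object is exactly $\Mell^\dR$ and not a nilpotent thickening, which reduces again to Theorem \ref{nu-theorem}, and then pass to the connective $\tmf\mmod\nu$ by the standard gluing argument along the cuspidal point as in \cite[Lemma 4.4]{homologytmf}. Here one forms the connective cover and checks compatibility of the $\E{k}$-structure at the cusp over each $\FF_p$, exactly as the uniqueness in the proof of Theorem \ref{nu-theorem} was reduced to the cuspidal cubic; the expectation is that the $\E{k}$-structure restricts along this gluing, giving the desired extension to $\tmf\mmod\nu$.
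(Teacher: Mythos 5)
The statement you are trying to prove is posed in the paper as Conjecture \ref{etale-site}; the paper offers no proof of it, so there is no argument to compare yours against. Judged on its own terms, your proposal is a research plan rather than a proof, and it leaves the essential content of the conjecture unresolved. The entire difficulty is the existence of an $\E{k}$-structure for some $k\geq 2$: the Thom-spectrum construction of $A$ over $\Omega S^5$ (the free $\E{1}$-space on $S^4$) only produces an $\E{1}$-ring, and the strongest multiplicative statement established in the paper is homotopy commutativity of $\tmf\mmod\nu$ (Corollary \ref{tmf-htpy-comm}), which does not imply an $\E{2}$-structure. You acknowledge this yourself --- ``I expect this to be the main obstacle, to be attacked either by\ldots'' --- but neither of the two suggested attacks is carried out: no $\E{k}$-refinement of $A$ is produced, and no obstruction groups are shown to vanish. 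Until that step is supplied, nothing in the proposal goes beyond what the paper already records.

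There is also a circularity worth flagging in your setup. You propose to obtain $\co^\der$ by descending the sheaf $\co^{\mathrm{top}}\otimes A$ of $\E{1}$-$\co^{\mathrm{top}}$-algebras on $\Mell$ to a sheaf on the \'etale site of $\Mell^\dR$ via the affine morphism $g$, but, as you note, the relative spectrum of a merely associative algebra does not define an object with a well-behaved \'etale site; so the descent step presupposes exactly the $\E{k}$-structure ($k\geq 2$) whose existence is the open question. Likewise, the assertion that the underlying sheaf of spectra is ``forced'' to be $g^\ast\co^{\mathrm{top}}$ is a plausible consequence of Landweber exactness and even-periodicity, but it is a constraint on a hypothetical object, not a construction of one. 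The parts of your plan that are genuinely verified --- the identification of global sections with $\TMF\otimes A\simeq \TMF\mmod\nu$ as $\E{1}$-rings, and the identification of the underlying algebra with $g_\ast\co_{\Mcub^\dR}$ --- are Theorem \ref{nu-theorem} and Corollary \ref{nu-cor} of the paper, and they are precisely the evidence on which the conjecture is based, not a proof of it.
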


\section{The descent spectral sequence}\label{descent-sseq}

Our goal in this section is to calculate the homotopy groups of $\tmf \mmod\nu$
via the descent spectral sequence of Corollary \ref{nu-cor}. To do this
calculation, we will use the Hopf algebroid presentation in Proposition
\ref{de-rham-presentation}.  The calculation of the descent spectral sequence
was done independently by Charles Rezk; although he stated part of the result to
us in an email, the argument is ours (so errors are our fault).
\begin{theorem}\label{hodge-theorem}
    There is an isomorphism
    \begin{equation}\label{e2-anss-hodge}
	\H^\ast(\Mcub^\dR; g^\ast \omega^{\otimes \ast}) \cong \Z[b_2, b_4, b_6,
	b_8, h_1, h_{21}]/I,
    \end{equation}
    where $b_i\in \H^0(\Mcub^\dR; g^\ast \omega^{\otimes i})$ of total degree
    $2i$, $h_1\in \H^1(\Mcub^\dR; g^\ast \omega)$ of total degree $1$, and
    $h_{21}\in \H^1(\Mcub^\dR; g^\ast \omega^{\otimes 3})$ of total degree $5$.
    If one defines
    \begin{align*}
	c_4 & = b_2^2 - 24 b_4, \ c_6 = -b_2^3 + 36 b_2 b_4 - 216 b_6,\\
	\Delta & = -b_2^2 b_8 - 8b_4^3 - 27b_6^2 + 9b_2 b_4 b_6,
    \end{align*}
    then the ideal $I$ is generated by the relations
    $$2h_1=0, \ 2h_{21} = 0, \ h_1^2 b_6 = h_{21}^2 b_2, \ 4b_8 = b_2 b_6 -
    b_4^2, \ 1728 \Delta = c_4^3 - c_6^2.$$
    Moreover, the descent/Adams--Novikov spectral sequence of Corollary
    \ref{nu-cor} collapses on the $E_4$-page, and $\pi_\ast(\tmf\mmod\nu)$ is
    determined by the differentials
    $$d_3(b_2) = h_1^3, \ d_3(b_4) = h_1^2 h_{21}, \ d_3(b_6) = h_1 h_{21}^2, \
    d_3(b_8) = h_{21}^3.$$
    One has the relations
    $$\eta^3 = 0, \ \eta^2\sigma_1 = 0, \ \eta \sigma_1^2 = 0, \ \sigma_1^3 =
    0$$
    in the homotopy of $\tmf\mmod\nu$, in addition to the relations in $I$. Here
    $\eta$ is represented by $h_1$, and $\sigma_1$ is represented by $h_{21}$.
    All the torsion in $\tmf \mmod\nu$ is concentrated in dimensions congruent
    to $1,2\pmod{4}$.
\end{theorem}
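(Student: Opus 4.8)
The plan is to identify the $E_2$-page $\H^\ast(\Mcub^\dR; g^\ast\omega^{\otimes\ast})$ with the cohomology of the Hopf algebroid $(D,\Sigma)$ of Proposition \ref{de-rham-presentation}, computed through its cobar complex, and then to run the descent/Adams--Novikov spectral sequence of Corollary \ref{nu-cor}. The $E_2$-computation splits according to whether $2$ is inverted. Away from $2$, the right unit formulas \eqref{right-unit} give $s=\tfrac12(\eta_R(a_1)-a_1)$ and $t=\tfrac12(\eta_R(a_3)-a_3)$, so $s,t$ are redundant: one may use them to rigidify $a_1=a_3=0$, after which no nontrivial automorphisms remain. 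Hence $\Mcub^\dR[1/2]$ is affine, its cohomology is concentrated in degree $0$, and equals $\Z[1/2][b_2,b_4,b_6]$ with $b_8$ recovered from $4b_8=b_2b_6-b_4^2$. This relation, together with $1728\Delta=c_4^3-c_6^2$, are the classical identities among Weierstrass $b$- and $c$-invariants and hold over $\Z$; they pin down the $s=0$ part of \eqref{e2-anss-hodge}.

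The two-primary part is where $h_1,h_{21}$ appear. First I would compute modulo $2$: by \eqref{right-unit} the elements $a_1,a_3$ become invariant and $s,t$ are primitive, so $[s]\in\H^1(\omega)$ and $[t]\in\H^1(\omega^{\otimes3})$ are cocycles, and I set $h_1=[s]$, $h_{21}=[t]$. The coboundaries $d(a_2)=a_1s+s^2$, $d(a_4)=a_3s+a_1t$, and $d(a_6)=a_3t+t^2$ show that all Frobenius powers $[s^{2^k}],[t^{2^k}]$ are decomposable (e.g.\ $[s^2]=a_1h_1$ and, from $d(a_2^2)$, $[s^4]=a_1^3h_1$), so no new polynomial generators arise and $h_1,h_{21}$ generate $\H^{\geq1}$ over $\H^0$. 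Integrally $d(a_1)=2s$ and $d(a_3)=2t$ realize $h_1,h_{21}$ as Bocksteins of $a_1,a_3$, yielding $2h_1=2h_{21}=0$ and, via the Bockstein spectral sequence, the relation $h_1^2b_6=h_{21}^2b_2$ in $\H^2$ (reflecting the mod-$2$ identity $a_3h_1=a_1h_{21}$ coming from $d(a_4)$). Gluing the $\Z[1/2]$- and $\Z_{(2)}$-answers along $\QQ$ by the arithmetic fracture square assembles \eqref{e2-anss-hodge} with the stated ideal $I$.

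For the spectral sequence I would first fix $d_3(b_2)=h_1^3$ by comparison with the sphere: two-locally $\eta^3=4\nu$, and since $\nu$ is nullhomotopic in $\tmf\mmod\nu$ this class must die, while $b_2$ (which represents the weight-$2$ quasimodular Eisenstein series, whose failure to be modular is exactly the Hodge--de Rham class detecting $\nu$ by Proposition \ref{hodge-nu}) is the only available source in $\H^0(\omega^{\otimes2})$. The remaining three differentials $d_3(b_4)=h_1^2h_{21}$, $d_3(b_6)=h_1h_{21}^2$, $d_3(b_8)=h_{21}^3$ I would obtain from the Toda bracket $\sigma_1\in\langle\eta,\nu,1\rangle$ of Proposition \ref{A-bp}, which ties the behavior of $\sigma_1$ (detected by $h_{21}$) to that of $\eta$, together with the $\FF_2[a_1,a_3]$-module symmetry exchanging $(s,a_1,h_1)$ with $(t,a_3,h_{21})$; this makes the four differentials a single orbit of the basic relation $\eta^3\in\langle\nu\rangle$. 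Since $d_3$ is a derivation, one checks $d_3(c_4)=d_3(c_6)=d_3(\Delta)=0$ (every surviving term carries a factor $2h_1$ or $2h_{21}$, the last case using $h_1^2b_6=h_{21}^2b_2$ to cancel), so $c_4,c_6,\Delta$ are permanent cycles and $\Delta$ supplies the $24$-periodicity. Collapse at $E_4$ then follows for sparseness reasons: the relations $h_1^3=h_1^2h_{21}=h_1h_{21}^2=h_{21}^3=0$ hold in $E_4$ as the $d_3$-targets, so $E_4$ is concentrated in cohomological degrees $0,1,2$, leaving no room for any $d_r$ with $r\geq4$, and the homotopy relations $\eta^3=\eta^2\sigma_1=\eta\sigma_1^2=\sigma_1^3=0$ are read off the four $d_3$-targets.

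Finally, the torsion-degree claim is bidegree bookkeeping. All torsion in $E_\infty$ is carried by the monomials $h_1^ah_{21}^b$ times permanent cycles, and the relations above leave only $(a,b)\in\{(1,0),(2,0),(0,1),(0,2),(1,1)\}$, of total degrees $1,2,5,10,6$, all $\equiv1,2\pmod4$. Every surviving torsion-free class in $\H^0$ is a polynomial in $c_4,c_6,\Delta$ of degrees $8,12,24$, all $\equiv0\pmod4$, so multiplying a torsion monomial by a permanent cycle preserves the residue, giving the claim. The main obstacle I anticipate is the rigorous determination of all four $d_3$-differentials: the $E_2$-computation, while laborious, is essentially forced by the Hopf algebroid, whereas the differentials require genuine topological input (the relation $\eta^3\in\langle\nu\rangle$ and the bracket $\sigma_1$) and care in transporting it along the comparison of Corollary \ref{nu-cor}.
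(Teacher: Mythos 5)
Your overall strategy --- present $\Mcub^\dR$ by the Hopf algebroid of Proposition \ref{de-rham-presentation}, reduce to the $2$-local case, compute the $E_2$-page by a mod-$2$/Bockstein argument, and then force the four $d_3$-differentials from $\eta^3=4\nu$ and the Toda bracket $\sigma_1\in\langle\eta,\nu,1_A\rangle$ --- is the same as the paper's. The substantive gap is in the $E_2$-computation, which is where most of the work lies. Observing that the Frobenius powers $[s^{2^k}],[t^{2^k}]$ are decomposable does not determine the cohomology of the mod-$2$ Hopf algebroid: it neither computes $\H^0$ (you never identify the invariants $b_2,b_4,b_6,b_8$ and show there are no others) nor bounds $\H^{\geq 1}$ (there could be classes outside the $\H^0$-span of the $h_1^ah_{21}^b$, and further relations among those monomials). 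The paper's actual device is to filter by the ideal $(2,a_1,a_3,a_4)$, so that the associated graded Hopf algebroid is $(\FF_2[a_2,a_6],\ol{D}[s,t])$; after a faithfully flat base change this presents $B\alpha_2\times B\alpha_2$, the classes $\ol{a}_0,\ol{a}_1,\ol{a}_3,\ol{a}_4$ span a representation $V$, the $E_1$-page is read off from the invariants $\FF_2[\ol{a}_0,\ol{a}_1^2,\ol{a}_0\ol{a}_4+\ol{a}_1\ol{a}_3,\ol{a}_3^2,\ol{a}_4^2]$ of $\Sym^\ast(V)$ together with $\FF_2[h_1,h_{21}]$, and the relations $2h_1=2h_{21}=0$ and $h_1^2b_6=h_{21}^2b_2$ arise as explicit Bockstein differentials (the last from a cobar computation of $d(a_4^2)$, not merely from $d(a_4)$). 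Some such invariant-theoretic input is unavoidable; your proposal asserts the answer rather than deriving it.

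There are also two concrete false claims. First, $d_3(c_6)\neq 0$: by the Leibniz rule $d_3(c_6)=-3b_2^2h_1^3=b_2^2h_1^3$ (since $2h_1=0$), and this class is nonzero on the $E_3$-page, so only $2c_6$ is a permanent cycle --- consistent with Corollary \ref{anss-zero}, whose list of filtration-zero survivors does not contain $c_6$, and with the familiar fact that $c_6$ requires a factor of $2$ at the prime $2$. Relatedly, the surviving torsion-free classes in $\H^0$ are not polynomials in $c_4,c_6,\Delta$ (e.g.\ $2b_2\in\pi_4$ and $b_2^2\in\pi_8$ survive). Neither error is fatal: $d_3(\Delta)=0$ does hold (your use of $h_1^2b_6=h_{21}^2b_2$ there is correct and is what the periodicity needs), and every class in $\H^0$ has total degree divisible by $4$ for the trivial reason that each $b_i$ has total degree $2i$ with $i$ even, so the mod-$4$ torsion statement survives without the $c_4,c_6,\Delta$ claim. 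Finally, for the three differentials beyond $d_3(b_2)=h_1^3$, the appeal to an $(s,a_1,h_1)\leftrightarrow(t,a_3,h_{21})$ ``symmetry'' should be dropped --- it is at best a mod-$2$ symmetry of the algebra and does not act on the spectral sequence --- and replaced by what the paper does: deduce $\eta^2\sigma_1=\eta\sigma_1^2=\sigma_1^3=0$ from the bracket, and check by a degree count that $b_4$, $b_6$, $b_8$ are the only possible sources for $h_1^2h_{21}$, $h_1h_{21}^2$, $h_{21}^3$ respectively.
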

Before giving the proof, we discuss some consequences.

\begin{remark}\label{name-quasimodular}
    By Theorem \ref{hodge-theorem}, there is a ring isomorphism
    $$\H^0(\Mcub^\dR; g^\ast \omega^{\otimes \ast})\cong \Z[b_2, b_4, b_6,
    b_8]/(4b_8 = b_2 b_6 - b_4^2, \ 1728 \Delta = c_4^3 - c_6^2).$$
    This ring has been studied before (in characteristic zero, in which case
    $b_8 = (b_2 b_6 - b_4^2)/4$), e.g., in \cite{kaneko-zagier, movasati}, where
    it is referred to as the ring of \emph{quasimodular forms}.  Theorem
    \ref{hodge-theorem} provides a calculation of the ring of integral
    quasimodular forms, and also justifies calling the ring spectrum
    $\tmf\mmod\nu$ by the name ``topological quasimodular forms''.
\end{remark}

The following corollary is a calculation via Theorem \ref{hodge-theorem}.
\begin{corollary}\label{anss-zero}
    In Adams--Novikov filtration zero, $b_i^2$ and twice any monomial in the
    $b_i$s survive to the $E_\infty$-page for $i=2,4,6,8$, as do $b_2 b_6$ and
    $\lambda_1 b_8 b_2^2 + \lambda_2 b_2 b_4 b_6$ for $\lambda_1 \equiv
    \lambda_2\pmod{2}$. In particular, $\Delta\in \pi_{24} (\tmf \mmod\nu)$, so
    the $\E{1}$-ring $\TMF \mmod\nu$ is $24$-periodic with periodicity generator
    $\Delta$.
\end{corollary}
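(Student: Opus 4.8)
The plan is to use Theorem~\ref{hodge-theorem} as a black box: it supplies the $E_2$-page $\Z[b_2,b_4,b_6,b_8,h_1,h_{21}]/I$, the relations in $I$, the fact that the descent spectral sequence of Corollary~\ref{nu-cor} collapses at $E_4$, and the values $d_3(b_{2i}) = h_1^{4-i}h_{21}^{i-1}$ for $i = 1,2,3,4$. Every class of this $E_2$-page lies in even internal degree $2t$, so $d_2$ (which shifts $2t$ by $1$) vanishes for parity reasons and $E_2 = E_3$. A filtration-zero class receives no differentials, since the source of any incoming differential would sit in negative filtration; hence such a class survives to $E_\infty = E_4$ if and only if $d_3$ kills it. The whole corollary therefore reduces to computing $d_3$ on the listed classes, using that $d_3$ is a derivation and that everything in positive filtration is $2$-torsion (from $2h_1 = 0$, $2h_{21} = 0$).

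First I would handle the easy cases. For a square, $d_3(b_i^2) = 2b_i\,d_3(b_i) = 0$ because $d_3(b_i)$ is a monomial in $h_1,h_{21}$ and so is $2$-torsion; for the same reason $d_3(2m) = 2\,d_3(m) = 0$ for any monomial $m$. For $b_2b_6$ I would write $d_3(b_2b_6) = h_1^3 b_6 + b_2 h_1 h_{21}^2 = h_1(h_1^2 b_6 + b_2 h_{21}^2)$ and substitute the relation $h_1^2 b_6 = h_{21}^2 b_2$ to obtain $2b_2 h_1 h_{21}^2 = 0$.

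The one genuinely informative computation is the pair $b_2^2 b_8$ and $b_2b_4b_6$. Since $d_3(b_2^2) = 0$, the Leibniz rule gives $d_3(b_2^2 b_8) = b_2^2 h_{21}^3$. For $b_2 b_4 b_6$ the three terms are $h_1^3 b_4 b_6 + b_2 h_1^2 h_{21} b_6 + b_2 b_4 h_1 h_{21}^2$; applying $h_1^2 b_6 = h_{21}^2 b_2$ to the first term shows the first and third cancel as $2b_2b_4 h_1 h_{21}^2 = 0$, and applying it to the middle term gives $b_2^2 h_{21}^3$. Thus $d_3(b_2^2 b_8) = d_3(b_2 b_4 b_6) = b_2^2 h_{21}^3$, so $d_3(\lambda_1 b_2^2 b_8 + \lambda_2 b_2 b_4 b_6) = (\lambda_1+\lambda_2)b_2^2 h_{21}^3$, which vanishes exactly when $\lambda_1 \equiv \lambda_2 \pmod 2$ — the source of the mod-$2$ congruence in the statement. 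Writing $\Delta = -b_2^2 b_8 + 9 b_2 b_4 b_6 - 8b_4^3 - 27 b_6^2$, the first two terms form such a combination (with $-1 \equiv 9$), the term $-8b_4^3$ is twice a monomial, and $-27b_6^2$ is an odd multiple of a square; each summand is a $d_3$-cycle, so $d_3(\Delta) = 0$ and $\Delta$ is a permanent cycle defining a class in $\pi_{24}(\tmf\mmod\nu)$.

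For the $24$-periodicity of $\TMF\mmod\nu$, I would base-change along $\Mell^\dR \to \Mcub^\dR$: the descent spectral sequence for $\TMF\mmod\nu$ is the restriction of the one in Corollary~\ref{nu-cor}, so by naturality it again collapses at $E_4$ and still has $d_3(\Delta) = 0$. On $\Mell$ the discriminant $\Delta$ is an invertible section of $\omega^{\otimes 12}$ (the open substack $\Mell \subset \Mcub$ is precisely the locus $\Delta \ne 0$), so $\Delta^{-1}$ exists in filtration zero and is also a permanent cycle (apply $d_3$ to $\Delta\cdot\Delta^{-1} = 1$ and cancel the unit). Multiplication by $\Delta$ is then an isomorphism on $E_\infty = \gr\,\pi_\ast(\TMF\mmod\nu)$, and by convergence of the spectral sequence (complete exhaustive filtration, bounded in each stem) this upgrades to an isomorphism $\Delta\colon \pi_n(\TMF\mmod\nu) \xrightarrow{\sim} \pi_{n+24}(\TMF\mmod\nu)$. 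The only real obstacle is bookkeeping: organizing the $d_3$ computations so that $h_1^2 b_6 = h_{21}^2 b_2$ is applied in the right places to force the pairwise cancellations, and, for the periodicity, promoting the associated-graded isomorphism to a genuine isomorphism of homotopy groups.
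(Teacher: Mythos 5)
Your proof is correct and is exactly the calculation the paper intends: the paper offers no written proof of Corollary~\ref{anss-zero} beyond the remark that it is ``a calculation via Theorem~\ref{hodge-theorem},'' and your application of the Leibniz rule for $d_3$ together with the relations $2h_1=2h_{21}=0$ and $h_1^2b_6=h_{21}^2b_2$ (yielding $d_3(b_2^2b_8)=d_3(b_2b_4b_6)=b_2^2h_{21}^3$, whence the mod-$2$ congruence and $d_3(\Delta)=0$) is that calculation. The periodicity argument via invertibility of $\Delta$ on $\Mell^\dR$ and convergence is the standard one and is what the paper implicitly relies on.
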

\begin{remark}\label{2-local}
    Note that $\tmf \mmod\nu$ is complex orientable after inverting $2$. This
    can be seen algebraically by noting that the Hopf algebroid $(D,\Sigma)$
    presenting $\Mcub^\dR$ becomes discrete after inverting $2$; indeed, the
    transformation $y\to y - a_1 x/2 - a_3/2$ transforms the Weierstrass
    equation \eqref{weierstrass} into
    $$y^2 = x^3 + a_2 x^2 + a_4 x + a_6,$$
    and one cannot make any coordinate changes to $x$ since it is fixed.  We
    find that $(D, \Sigma)$ is isomorphic to the discrete Hopf algebroid $(D' =
    \Z[1/2][a_2, a_4, a_6], D')$, and so $\pi_\ast(\tmf[1/2] \mmod\nu) \cong
    \Z[1/2][a_2, a_4, a_6]$, with $|a_i| = 2i$. In light of this, we only need
    to prove Theorem \ref{hodge-theorem} after $2$-localization.
\end{remark}
\begin{remark}
    The Hurewicz image of $\tmf$ in $\tmf \mmod\nu$ can be determined from
    Theorem \ref{hodge-theorem}. The subring generated by $\eta$, $\sigma_1$,
    $2b_2$, and $b_2^2$ is in the image of the map $\pi_\ast A\to \pi_\ast
    \tmf\mmod\nu$. There relationship between $\pi_\ast(\tmf \mmod\nu)$ and
    $\pi_\ast(\tmf)$, however, is more interesting than merely the Hurewicz
    image.
    The ($2$-local) calculation in \cite{bauer-tmf} shows that $\ol{\kappa}\nu$
    vanishes in $\pi_{23}(\tmf)$; this is detected in the Adams--Novikov
    spectral sequence by a $d_5$-differential $d_5(\Delta) = \ol{\kappa}\nu$.
    This implies that the element $8\Delta\in \pi_{24}(\tmf)$ can be expressed
    as an element of the Toda bracket $\langle 8, \nu, \ol{\kappa}\rangle$.
    Equivalently, the map $\ol{\kappa}: S^{20} \to \tmf$ extends to a map from
    $\Sigma^{20} C\nu$, and hence from $\Sigma^{20} \tmf \wedge C\nu$.
    Composition with the map $S^{24} \to \Sigma^{20} C\nu$ which is degree $8$
    on the top cell produces the element $8\Delta\in \pi_{24} \tmf$ (up to
    indeterminacy). In other words, $8\Delta$ comes from an element of
    $\pi_{24}(\Sigma^{20} \tmf \wedge C\nu) \cong \pi_4(\tmf \wedge C\nu)$.
    Under the canonical map $\tmf \wedge C\nu\to \tmf \mmod\nu$, this element
    corresponds to $2b_2\in \pi_4(\tmf \mmod\nu)$. Similarly, the element
    $\Delta \eta\in\pi_{24}(\tmf)$ can be related to the element $\sigma_1\in
    \pi_5(\tmf \mmod\nu)$. This is related to the approach taken in
    \cite{tmf-witten} to show that the Ando-Hopkins-Rezk orientation
$\MString\to \tmf$ from \cite{koandtmf} is surjective on homotopy.  \end{remark}
\begin{remark}\label{weight-2-eisenstein} One can give explicit $q$-expansions
    for each of the generators of $\H^0(\Mcub^\dR; g^\ast \omega^{\otimes \ast})
    \cong \Z[b_2, b_4, b_6, b_8]$. The element $b_2$ is related to the weight
    $2$ Eisenstein series via the $q$-expansion: $$b_2 = 1 - 24\sum_{n\geq 1}
    \sigma_1(n) q^n, \ \text{where } \sigma_1(n) = \sum_{d\geq 1, d|n} d.$$
\end{remark} \begin{remark}\label{orientations} The explicit characteristic
    power series for the Witten genus (in \cite[Section 6.3]{hirzebruch}, for
    instance) shows that the Witten genus of a spin $4k$-manifold is a modular
    form if its first Pontryagin class vanishes. This restriction arises
    essentially because of the appearance of a weight $2$ Eisenstein series,
    which is not a modular form, in the characteristic power series. By Remark
    \ref{weight-2-eisenstein}, the weight $2$ Eisenstein series is in fact a
    global section of $\H^0(\Mcub^\dR; g^\ast \omega^{\otimes \ast})$, and so
    one might expect to be able to refine the orientation $\MU\langle
    6\rangle\to \tmf$ from \cite{koandtmf} to an orientation $\mathrm{MSU}\to
    \tmf\mmod\nu$. We will return to this question in forthcoming work; see also
    Remark \ref{twisting}.  \end{remark} The following application is due to
    Rezk.  \begin{corollary}\label{tmf-htpy-comm} The $\E{1}$-ring $\tmf
    \mmod\nu$ admits the structure of a homotopy commutative ring.
    \end{corollary}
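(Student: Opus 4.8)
The plan is to exploit the factorization $\tmf\mmod\nu\simeq\tmf\otimes A$ of Remark~\ref{thom-base}, where $A=T(C\nu)$ is the Thom spectrum of $\Omega S^5\to B\GL_1(\S)$, and to reduce homotopy commutativity to a single map that is annihilated by the relation $\nu=0$. Since $\tmf$ is $\Eoo$, the multiplication $\mu$ and the swapped multiplication $\mu\tau$ on $R:=\tmf\otimes A$ are both $\tmf$-linear and agree on the unit, so the difference $[\mu]-[\mu\tau]\in[R\otimes_\tmf R,R]_\tmf$ factors through $\ol R\otimes_\tmf\ol R$, where $\ol R=\cofib(\tmf\to R)=\tmf\otimes\ol A$ and $\ol A=\cofib(\S\to A)$. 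Identifying $R\otimes_\tmf R\cong\tmf\otimes(A\otimes_\S A)$ and using the coherence of the commutativity of $\tmf$, this difference is exactly $\tmf\otimes c_A$, the image under $-\otimes\tmf$ of the commutator $c_A=\mu_A-\mu_A\tau_A\colon A\otimes_\S A\to A$. Thus it suffices to prove $\tmf\otimes c_A\simeq 0$.

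First I would dispose of the odd-primary part. After inverting $2$, Remark~\ref{2-local} identifies $\pi_\ast(\tmf[1/2]\mmod\nu)$ with the polynomial ring $\Z[1/2][a_2,a_4,a_6]$ and shows the ring is complex orientable; an $\E{1}$-ring that is complex orientable with even, torsion-free homotopy is homotopy commutative, so $c_A$ is already null after inverting $2$. Hence $\tmf\otimes c_A$ is $2$-power torsion, and by the arithmetic fracture square (the rational statement being immediate) it remains only to show that it vanishes $2$-locally. This localizes the entire problem to the prime $2$.

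The heart of the argument is that the non-commutativity of $A$ is \emph{generated by} $\nu$. Here $A=T(C\nu)$ is the free $\E{1}$-algebra on the unital object $C\nu$, whose only non-split structure is the attaching map $\nu\colon S^3\to\S$ of the top cell; were $C\nu$ split, $T(C\nu)$ would be the tensor algebra on a degree-$4$ (\emph{even}) generator and hence homotopy commutative by the vanishing of the Koszul block-transposition sign. The genuine non-commutativity is therefore carried by $\nu$: concretely, by Proposition~\ref{A-bp} the first class of $A$ above the degree-$4$ generator is $\sigma_1\in\langle\eta,\nu,1_A\rangle\subseteq\pi_5(A)$, so the relevant attaching data of $\ol A$ is $\nu$. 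Filtering $\ol A\otimes_\S\ol A$ by its cell filtration, I would show by induction that $c_A$ is, stagewise, divisible by $\nu$ (its value on homotopy vanishes by graded-commutativity, so each stage is a higher-filtration contribution factoring through multiplication by $\nu$). Since $\tmf\mmod\nu=\tmf\otimes A$ is the $\E{1}$-quotient that by construction carries a nullhomotopy of $\nu$, every such $\nu$-divisible stage dies after $-\otimes\tmf$, giving $\tmf\otimes c_A\simeq 0$.

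As an independent cross-check I would run the computational route: by Theorem~\ref{hodge-theorem} the ring $\pi_\ast(\tmf\mmod\nu)$ is graded-commutative, with the only odd generators $\eta=h_1$ and $\sigma_1=h_{21}$ satisfying the listed relations, so $\mu$ and $\mu\tau$ agree on homotopy groups; passing from this to $[\mu]=[\mu\tau]$ is a matter of controlling the higher-filtration discrepancy in the universal-coefficient spectral sequence for $[R\otimes_\tmf R,R]_\tmf$, where the concentration of torsion in degrees $\equiv 1,2\pmod 4$ (Theorem~\ref{hodge-theorem}) constrains which $\Ext$-groups can receive it. The main obstacle is precisely this last point: verifying that the \emph{map-level} commutator, not merely its values on homotopy groups, is annihilated---that is, ruling out a surviving element of positive filtration. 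I expect this to reduce to the observation that any such element must be $\nu$-divisible by the $T(C\nu)$-analysis above and is therefore zero once $\nu$ is nullhomotopic, but arranging for the obstruction bookkeeping and the $\nu$-divisibility to meet cleanly is the delicate step.
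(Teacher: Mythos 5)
Your setup (the commutator $\mu-\mu\tau$ is a $\tmf$-linear map factoring through $\ol{R}\otimes_\tmf\ol{R}$, and the odd-primary part is disposed of via Remark \ref{2-local}) is consistent with the paper, but your main $2$-local step is a genuine gap. The assertion that the commutator of $A=T(C\nu)$ is ``stagewise divisible by $\nu$'' is never argued: you infer it from the heuristic that a split $C\nu$ would give a commutative tensor algebra, but the discrepancy between $T(C\nu)$ and $T(\S\oplus S^4)$ is not literally ``a map factoring through multiplication by $\nu$'' on each filtration stage, and the fact that $\tmf\mmod\nu$ genuinely detects the $2$-torsion classes $\eta$ and $\sigma_1$ (Theorem \ref{hodge-theorem}) shows the splitting intuition cannot be taken at face value. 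Moreover, ``$\nu$-divisible, hence zero after tensoring with $\tmf\mmod\nu$'' conflates a factorization through a $\nu$-multiplication with annihilation by the chosen nullhomotopy of $\nu$; making that precise is exactly the bookkeeping you defer, and you correctly flag it as the delicate step --- it is not a step that can be omitted.

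The irony is that your ``cross-check'' is essentially the paper's actual proof, and it closes far more easily than you fear: there is no universal-coefficient or higher-filtration issue to control. Since $A$ is the Thom spectrum over $\Omega S^5$, the $\tmf$-module $\ol{R}\otimes_\tmf\ol{R}$ has cells only in dimensions divisible by $4$, so the successive obstructions to nullhomotoping the commutator are honest elements of $\pi_{4k}(\tmf\mmod\nu)$ (and the indeterminacy at each stage is governed by $\pi_{4k+1}$, which vanishes after inverting $2$ by Remark \ref{2-local}, so the obstructions are well-defined up to torsion). These obstructions are torsion by your odd-primary step, while Theorem \ref{hodge-theorem} places all torsion of $\pi_\ast(\tmf\mmod\nu)$ in degrees congruent to $1,2\pmod 4$; hence each $\pi_{4k}$ is torsion-free and the obstructions vanish. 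That short argument is the paper's proof; the $\nu$-divisibility route should be discarded in its favor.
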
 \begin{proof} We know that $\tmf \mmod\nu$ only has
	$\tmf$-module cells in dimensions divisible by $4$, so all obstructions
	to its homotopy commutativity live in these dimensions. We know that
	these obstructions vanish after inverting $2$, since $A[1/2]$ is
	homotopy commutative. By Theorem \ref{hodge-theorem}, all the homotopy
    groups of $\tmf \mmod\nu$ in dimensions divisible by $4$ are torsion-free,
    so $\tmf \mmod\nu$ must indeed be homotopy commutative.  \end{proof} An
    immediate consequence of Theorem \ref{nu-theorem} and Corollary
    \ref{tmf-htpy-comm} is: \begin{corollary}\label{stack-tmf-A} The stack
	$\M_{\tmf \mmod\nu}$ associated to the homotopy commutative ring $\tmf
	\mmod\nu$ is isomorphic to $\Mcub^\dR$.  \end{corollary}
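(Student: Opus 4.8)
The plan is to reduce the statement to the relative-spectrum description of the stack attached to a homotopy commutative $\tmf$-algebra. Because $\tmf\mmod\nu$ is homotopy commutative by Corollary \ref{tmf-htpy-comm}, the stack $\M_{\tmf\mmod\nu}$ is defined (in the sense of \cite[Chapter 9]{tmf}, \cite[Section 2.1]{homologytmf}), and the unit map $\tmf\to \tmf\mmod\nu$ furnishes a structure morphism $\M_{\tmf\mmod\nu}\to \M_{\tmf} = \Mcub$. The essential input I would use is that, for a homotopy commutative $\tmf$-algebra $R$ whose associated quasicoherent sheaf of algebras $\cf(R)$ on $\Mcub$ is flat over $\co_{\Mcub}$, the stack $\M_R$ is recovered as the relative spectrum $\ul{\spec}_{\Mcub}\,\cf(R)$, compatibly with the map to $\Mcub$. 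This is exactly the affine-morphism case of the general formalism relating $\tmf$-modules to quasicoherent sheaves on $\Mcub$.

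First I would identify the sheaf $\cf(\tmf\mmod\nu)$. Since $\tmf\mmod\nu \simeq \tmf\otimes A$ as $\E{1}$-$\tmf$-algebras (Remark \ref{thom-base}), the functor $\cf$ sends $\tmf\mmod\nu$ to the sheaf $\cf(A)$ on $\Mcub$, which by Theorem \ref{nu-theorem} is isomorphic as an algebra to $g_\ast\co_{\Mcub^\dR}$. The morphism $g:\Mcub^\dR\to \Mcub$ is flat and affine --- as noted in the proof of Corollary \ref{nu-cor}, and visibly so from the description of $\Mcub^\dR$ as the bundle of Hodge splittings, whose fibers make $g_\ast\co_{\Mcub^\dR}$ locally a polynomial algebra on one generator. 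In particular $g_\ast\co_{\Mcub^\dR}$ is a flat $\co_{\Mcub}$-algebra, so the relative-spectrum description applies. Feeding this in gives
$$\M_{\tmf\mmod\nu} \;\cong\; \ul{\spec}_{\Mcub}\,\cf(\tmf\mmod\nu) \;\cong\; \ul{\spec}_{\Mcub}\,g_\ast\co_{\Mcub^\dR} \;\cong\; \Mcub^\dR,$$
the last isomorphism because $g$ is affine, so $\Mcub^\dR$ is the relative spectrum of its own pushforward structure sheaf over $\Mcub$; and all three isomorphisms are compatible with the projections to $\Mcub$.

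The step I expect to require the most care is the relative-spectrum principle itself, for two reasons. First, $\tmf\mmod\nu$ is only an $\E{1}$-ring, so \emph{a priori} $\cf(\tmf\mmod\nu)$ is merely a sheaf of associative algebras; it is precisely Corollary \ref{tmf-htpy-comm} that upgrades the multiplication to a commutative one on homotopy (hence on $\pi_0$), so that $\ul{\spec}_{\Mcub}$ is meaningful --- and Theorem \ref{nu-theorem} already exhibits $\cf(A)$ as the \emph{commutative} algebra $g_\ast\co_{\Mcub^\dR}$. Second, one must check that the canonical comparison map $\M_{\tmf\mmod\nu}\to \ul{\spec}_{\Mcub}\,\cf(\tmf\mmod\nu)$ produced by the formalism is an isomorphism and not merely a map; here the flatness and affineness of $g$ let one verify this fibrewise over $\Mcub$, exactly as in the reduction to the cuspidal cubic used in the proof of Theorem \ref{nu-theorem}. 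Once this is in place, the identification of the descent spectral sequences in Corollary \ref{nu-cor} serves as a consistency check: both sides compute $\H^\ast(\Mcub^\dR; g^\ast\omega^{\otimes\ast})$.
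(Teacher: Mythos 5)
Your proposal is correct and fills in exactly the reasoning the paper leaves implicit: the paper derives this corollary as an ``immediate consequence'' of Theorem \ref{nu-theorem} (identifying $\cf(A)\cong g_\ast\co_{\Mcub^\dR}$) and Corollary \ref{tmf-htpy-comm} (making the stack $\M_{\tmf\mmod\nu}$ well-defined), which is precisely the route you take via the relative spectrum over $\Mcub$. No substantive difference from the paper's approach.
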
 \begin{remark}
	    Corollary \ref{stack-tmf-A} implies, for instance, that the fact
	    that $\nu$ is not detected by $\Lone \tmf$ is related to the
	    existence of a dotted map $$\xymatrix{ & \Mcub^\dR\ar[d]\\
	    \Mell^\mathrm{ord} \ar[r] \ar@{-->}[ur] & \Mell, }$$ where
	    $\Mell^\mathrm{ord}$ denotes the moduli stack of ordinary elliptic
	    curves. This existence of this dotted map is well-known in
	    arithmetic geometry: it is the statement that the Frobenius (which
    exists for ordinary elliptic curves via quotienting out by the canonical
    subgroup) splits the Hodge filtration (see \cite[Section
    A2.3]{katz-p-adic}).  \end{remark} Finally, we give the proof of Theorem
    \ref{hodge-theorem}.  \begin{proof}[Proof of Theorem \ref{hodge-theorem}] We
	shall implicitly $2$-localize everywhere; this is sufficient by Remark
	\ref{2-local}.  We begin by calculating $\H^\ast(\Mcub^\dR; g^\ast
	\omega^{\otimes \ast}) = \Ext_\Sigma(D, D)$, where $(D,\Sigma)$ is the
	Hopf algebroid $(\Z[a_1, a_2, a_3, a_4, a_6], D[s,t])$. Following
	\cite[Chapter III]{silverman}, define quantities $$b_2 = a_1^2 + 4a_2, \
	b_4 = 2a_4 + a_1 a_3, \ b_6 = a_3^2 + 4a_6, \ b_8 = a_1^2 a_6 + 4a_2 a_6
	- a_1 a_3 a_4 + a_2 a_3^2 - a_4^2.$$ Notice that $b_2 b_6 - b_4^2 =
	4b_8$ and that $1728 \Delta = c_4^2 - c_6^2$ where $c_4$, $c_6$, and
	$\Delta$ are as in the theorem statement.

    Let $I$ denote the ideal $(2, a_1, a_3, a_4)$, and define a Hopf algebroid
    $$(\ol{D}, \ol{\Sigma}) = (D/I, \Sigma/I) = (\FF_2[a_2, a_6],
    \ol{D}[s,t]).$$
    The right unit sends
    $$a_2\mapsto a_2 + s^2, \ a_6\mapsto a_6 + t^2.$$
    Then there is a Bockstein spectral sequence
    \begin{equation}\label{bockstein}
	E_1^{p,q,n} = \Ext^{p,n}_{\ol{\Sigma}}(\ol{D}, \Sym^q_{\ol{D}}(I/I^2))
	\Rightarrow \Ext^{p,n}_\Sigma(D, D),
    \end{equation}
    with $d_r:E_r^{p,q,n} \to E_r^{p+1,q+r,n}$. Now, $I/I^2 =
    \ol{D}\otimes_{\FF_2} V$, with $V = \FF_2\{\ol{a}_0, \ol{a}_1, \ol{a}_3,
    \ol{a}_4\}$ where $\ol{a}_0$, $\ol{a}_1$, $\ol{a}_3$, and $\ol{a}_4$
    represent $2$, $a_1$, $a_3$, and $a_4$ respectively.  The comodule structure
    $I/I^2 \to I/I^2\otimes_{\ol{D}} \ol{\Sigma}$ sends
    \begin{equation}\label{alpha-2}
	\ol{a}_0 \mapsto \ol{a}_0, \ \ol{a}_1 \mapsto \ol{a}_1 + \ol{a}_0 s, \
	\ol{a}_3 \mapsto \ol{a}_3 + \ol{a}_0 t, \
	\ol{a}_4 \mapsto \ol{a}_4 + \ol{a}_3 s + \ol{a}_1 t + \ol{a}_0 st.
    \end{equation}
    There is a map $\ol{D} \to \FF_2$ induced by sending $a_2$ and $a_6$
    to zero, and so we obtain a Hopf algebroid $(\FF_2, C)$ with
    $$C = \FF_2\otimes_{\ol{D}} \ol{\Sigma} \otimes_{\ol{D}} \FF_2 \cong
    \FF_2[a_2, a_6, s, t]/(a_2, a_6, \eta_R(a_2), \eta_R(a_6)) \cong
    \FF_2[s,t]/(s^2, t^2) = E(s,t).$$
    To emphasize the connection to homotopy theory, we write $h_1$ for $s$ and
    $h_{21}$ for $t$.  Now, the map $\ol{D}\to \FF_2\otimes_{\ol{D}} \ol{\Sigma}
    = \FF_2[h_1,h_{21}]$ given by sending $a_2$ to $h_1^2$ and $a_6$ to
    $h_{21}^2$ is faithfully flat, and defines an isomorphism $(\ol{D},
    \ol{\Sigma}) \to (\FF_2, C)$ of Hopf algebroids. Moreover, the Hopf
    algebroid $(\FF_2, C)$ presents the ({graded}) stack $B\alpha_2\times
    B\alpha_2$ over $\spec(\FF_2)$. It follows that
    $$\Ext^{p,n}_{\ol{\Sigma}}(\ol{D}, \Sym^q_{\ol{D}}(I/I^2)) \cong
    \Ext^{p,n}_C(\FF_2, \Sym^q_{\FF_2}(V)) = \H^{p,n}(B\alpha_2\times B\alpha_2;
    \Sym^q(V)).$$
    The comodule structure on $I/I^2$ appearing in Equation \eqref{alpha-2} is a
    representation of $\alpha_2\times \alpha_2$ on $V$, and so:
    \begin{equation}\label{regular-fixed}
	\Ext^{0,\ast}_C(\FF_2, \Sym^\ast_{\FF_2}(V)) = V^{\alpha_2\times
	\alpha_2} = \FF_2[\ol{a}_0, \ol{a}_1^2, \ol{a}_0 \ol{a}_4 + \ol{a}_1
	\ol{a}_3, \ol{a}_3^2, \ol{a}_4^2];
    \end{equation}
    indeed, these are the invariants under the $\alpha_2\times \alpha_2$-action
    on $V$. Moreover, by looking at the expressions for $b_2$, $b_4$, $b_6$, and
    $b_8$, we find that they are represented by $\ol{a}_1^2$, $\ol{a}_0 \ol{a}_4
    + \ol{a}_1 \ol{a}_3$, $\ol{a}_3^2$, and $\ol{a}_4^2$, respectively; in
    particular, all of the generators of $V^{\alpha_2\times \alpha_2}$ are
    permanent cycles in the Bockstein spectral sequence. Moreover,
    $\H^{\ast,\ast}(B\alpha_2\times B\alpha_2; \Sym^q(V)) \cong \FF_2$ for
    $q\geq 1$, and for $q=0$ we have $\H^{\ast,\ast}(B\alpha_2\times B\alpha_2;
    \Sym^0(V)) \cong \FF_2[h_1, h_{21}]$, where $h_1 = [s]$ and $h_{21} = [t]$.
    Together with Equation \eqref{regular-fixed}, we obtain a complete
    calculation of the $E_1$-page of the Bockstein spectral sequence
    \eqref{bockstein}.

    The right unit in Equation \eqref{right-unit} gives Bockstein differentials
    $a_1\mapsto 2h_1$ and $a_3 \mapsto 2h_{21}$, which correspond to $2\eta$ and
    $2\sigma_1$ being null, respectively. Moreover, 
    $$d(a_4^2) = a_3^2 s^2 + a_1^2 t^2 + 4s^2 t^2 + 4a_1st^2 - 2a_1 a_3 st -
    4a_3 s^2 t$$
    in the cobar complex, and
    $$s^2 b_6 + t^2 b_2 = a_3^2 s^2 + a_1^2 t^2 + 4a_6 s^2 + 4a_2 t^2.$$
    Since $a_2 = 0$ and $a_6 = 0$, we have $a_1 s = -s^2$ and $a_3 t = -t^2$.
    This produces a differential with target $h_1^2 b_6 + h_{21}^2 b_2$.
    Combining together all of these facts gives Equation \eqref{e2-anss-hodge}
    as the cohomology of the moduli stack $\Mcub^\dR$. As in Corollary
    \ref{nu-cor}, this is the $E_2$-page of the Adams--Novikov spectral sequence
    for $\tmf \mmod\nu$. We now calculate the Adams--Novikov differentials. The
    $15$-skeleton of $A$ is as shown in Figure \ref{A-15-skeleton}. 

    We know that $\eta^3 = 4\nu$ vanishes in $\pi_\ast A$, so $h_1^3$ must die
    in the Adams--Novikov spectral sequence for $\tmf \mmod\nu$. There is only
    one possibility, namely the $d_3$-differential $d_3(b_2) = h_1^3$. (Note
    that this differential already exists in the Adams--Novikov spectral
    sequence for $A$, where $b_2$ is represented by $v_1^2$, i.e., the class
    $[y_2]$ in the cobar complex (via Proposition \ref{A-bp}).) As a
    consequence, both $\eta$ and $\sigma_1$ are permanent cycles in the
    Adams--Novikov spectral sequence for $A$.

    Next, we know from Proposition \ref{A-bp} that $\sigma_1$ is detected in the
    Adams--Novikov spectral sequence for $A$ by $h_{21}$. Since $\sigma_1\in
    \langle \eta, \nu, 1_A\rangle$, one has that $\eta^2 \sigma_1 = 0$ in
    $\pi_\ast A$, so $h_1^2 h_{12}$ must die in $\tmf \mmod\nu$. There is no
    possibility other than $d_3(b_4)$ for a differential to kill $h_1^2 h_{21}$
    (except for a $d_3$-differential on $b_2^2$, but this cannot kill $h_1^2
    h_{12}$).
    Note that $h_1 h_{12}$ is a permanent cycle and represents $\eta \sigma_1$. 

    For the third differential, note that since $\sigma_1\in \langle \eta, \nu,
    1_A\rangle$, we have $\eta \sigma_1^2 = 0$. There is only one possibility,
    namely $d_3(b_6) = h_1 h_{21}^2$. (Since there is a $d_3$-differential
    $d_3(b_2) = h_1^3$, we have $d_3(h_{21}^2 b_2) = h_1^3 h_{21}^2$. But there
    is a relation $h_1^2 b_6 = h_{21}^2 b_2$, so $d_3(h_1^2 b_6) = h_1^3
    h_{21}^2$, which also forces $d_3(b_6) = h_1 h_{21}^2$.) Note that
    $h_{12}^2$ is a permanent cycle and represents $\sigma_1^2$.

    Finally, $\sigma_1^3 = 0$ in $\pi_\ast A$ (and therefore in $\tmf
    \mmod\nu$). It follows that the element $h_{21}^3$ must be the target of a
    differential in the Adams--Novikov spectral sequence for $\tmf \mmod\nu$.
    The only possibilities are a $d_3$-differential on $b_4^2$, $b_2 b_6$, or
    $b_8$. Only $b_8$ can kill $h_{21}^3$, and we conclude the
    $d_3$-differential $d_3(b_8) = h_{21}^3$.
    At this point, there are no more possibilities for differentials
    in the descent/Adams--Novikov spectral sequence for $\tmf \mmod\nu$, and the
    spectral sequence collapses at the $E_4$-page.

    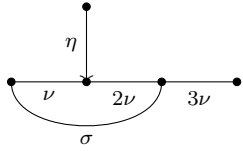
\begin{figure}
        \begin{tikzpicture}[scale=1]
            \draw [fill] (0, 0) circle [radius=0.05];
            \draw [fill] (1, 0) circle [radius=0.05];
            \draw [fill] (1, 1) circle [radius=0.05];
            \draw [fill] (2, 0) circle [radius=0.05];
            \draw [fill] (3, 0) circle [radius=0.05];
    
	    \draw (0,0) to node[below] {\footnotesize{$\nu$}} (1,0);
	    \draw (1,0) to node[below] {\footnotesize{$2\nu$}} (2,0);
	    \draw (2,0) to node[below] {\footnotesize{$3\nu$}} (3,0);

	    \draw [->] (1,1) to node[left] {\footnotesize{$\eta$}} (1,0);
    
	    \draw (0,0) to[out=-90,in=-90] node[below] {\footnotesize{$\sigma$}}
	    (2,0);
        \end{tikzpicture}
	\caption{$15$-skeleton of $A$ at the prime $2$ shown horizontally, with
	$0$-cell on the left. The element $\sigma_1$ is shown.}
        \label{A-15-skeleton}
    \end{figure}
\end{proof}

\bibliographystyle{alpha}
\bibliography{main}

\begin{thebibliography}{{Mov}12}

\bibitem[AB19]{barthel-thom}
O.~{Antol\'{\i}n-Camarena} and T.~{Barthel}.
\newblock A simple universal property of {T}hom ring spectra.
\newblock {\em J. Topol.}, 12(1):56--78, 2019.

\bibitem[ABG10]{twist-tmf}
M.~{Ando}, A.~{Blumberg}, and D.~{Gepner}.
\newblock Twists of {$K$}-theory and {TMF}.
\newblock In {\em Superstrings, geometry, topology, and {$C^\ast$}-algebras},
  volume~81 of {\em Proc. Sympos. Pure Math.}, pages 27--63. Amer. Math. Soc.,
  Providence, RI, 2010.

\bibitem[AHR10]{koandtmf}
M.~{Ando}, M.~{Hopkins}, and C.~{Rezk}.
\newblock {Multiplicative orientations of $KO$-theory and of the spectrum of
  topological modular forms}.
\newblock \url{http://www.math.uiuc.edu/~mando/papers/koandtmf.pdf}, May 2010.

\bibitem[{Bau}08]{bauer-tmf}
T.~{Bauer}.
\newblock {Computation of the homotopy of the spectrum $\mathrm{tmf}$}.
\newblock In {\em Groups, homotopy and configuration spaces}, volume~13 of {\em
  Geom. Topol. Monogr.}, pages 11--40. Geom. Topol. Publ., Coventry, 2008.

\bibitem[{Dev}18]{wood}
S.~{Devalapurkar}.
\newblock {An equivariant version of Wood's theorem}.
\newblock \url{http://www.mit.edu/~sanathd/wood.pdf}, 2018.

\bibitem[{Dev}19]{tmf-witten}
S.~{Devalapurkar}.
\newblock {The Ando-Hopkins-Rezk orientation is surjective}.
\newblock \url{https://arxiv.org/abs/1911.10534}, 2019.

\bibitem[DFHH14]{tmf}
C.~{Douglas}, J.~{Francis}, A.~{Henriques}, and M.~{Hill}.
\newblock {\em {Topological Modular Forms}}, volume 201 of {\em Mathematical
  Surveys and Monographs}.
\newblock American Mathematical Society, 2014.

\bibitem[HBJ92]{hirzebruch}
F.~{Hirzebruch}, T.~{Berger}, and R.~{Jung}.
\newblock {\em Manifolds and modular forms}.
\newblock Aspects of Mathematics, E20. Friedr. Vieweg \& Sohn, Braunschweig,
  1992.
\newblock With appendices by Nils-Peter Skoruppa and by Paul Baum.

\bibitem[{Hop}02]{hopkins-icm}
M.~J. {Hopkins}.
\newblock Algebraic topology and modular forms.
\newblock In {\em Proceedings of the {I}nternational {C}ongress of
  {M}athematicians, {V}ol. {I} ({B}eijing, 2002)}, pages 291--317. Higher Ed.
  Press, Beijing, 2002.

\bibitem[{Kat}73]{katz-p-adic}
N.~{Katz}.
\newblock {$p$}-adic properties of modular schemes and modular forms.
\newblock pages 69--190. Lecture Notes in Mathematics, Vol. 350, 1973.

\bibitem[{Kat}81]{katz-crystalline}
N.~{Katz}.
\newblock Crystalline cohomology, {D}ieudonn\'{e} modules, and {J}acobi sums.
\newblock In {\em Automorphic forms, representation theory and arithmetic
  ({B}ombay, 1979)}, volume~10 of {\em Tata Inst. Fund. Res. Studies in Math.},
  pages 165--246. Tata Inst. Fundamental Res., Bombay, 1981.

\bibitem[KM85]{katz-mazur}
N.~{Katz} and B.~{Mazur}.
\newblock {\em Arithmetic moduli of elliptic curves}, volume 108 of {\em Annals
  of Mathematics Studies}.
\newblock Princeton University Press, Princeton, NJ, 1985.

\bibitem[KZ95]{kaneko-zagier}
M.~{Kaneko} and D.~{Zagier}.
\newblock A generalized {J}acobi theta function and quasimodular forms.
\newblock In {\em The moduli space of curves ({T}exel {I}sland, 1994)}, volume
  129 of {\em Progr. Math.}, pages 165--172. Birkh\"{a}user Boston, Boston, MA,
  1995.

\bibitem[{Mat}16]{homologytmf}
A.~{Mathew}.
\newblock {The homology of tmf}.
\newblock {\em Homology Homotopy Appl.}, 18(2):1--20, 2016.

\bibitem[{Mov}12]{movasati}
H.~{Movasati}.
\newblock Quasi-modular forms attached to elliptic curves, {I}.
\newblock {\em Ann. Math. Blaise Pascal}, 19(2):307--377, 2012.

\bibitem[{Rez}13]{rezk-unpublished}
C.~{Rezk}.
\newblock {Power operations in Morava $E$-theory: structure and calculations
  (draft)}.
\newblock \url{https://faculty.math.illinois.edu/~rezk/power-ops-ht-2.pdf},
  2013.

\bibitem[{Sil}86]{silverman}
J.~{Silverman}.
\newblock {\em The arithmetic of elliptic curves}, volume 106 of {\em Graduate
  Texts in Mathematics}.
\newblock Springer-Verlag, New York, 1986.

\end{thebibliography}
\end{document}